\documentclass[11pt]{amsart}
\textwidth=420pt \textheight=615pt

\usepackage{graphicx,epsfig}
\usepackage{cite}
\usepackage{mathtools}
\usepackage{enumerate}
\usepackage{color}




 \usepackage[bookmarks=true,
bookmarksnumbered=true, breaklinks=true,
pdfstartview=FitH, hyperfigures=false,
plainpages=false, naturalnames=true,
colorlinks=false,
pdfpagelabels]{hyperref}

\makeatletter
\newtheorem*{rep@theorem}{\rep@title}
\newcommand{\newreptheorem}[2]{%
\newenvironment{rep#1}[1]{%
 \def\rep@title{#2 \ref{##1}}%
 \begin{rep@theorem}}%
 {\end{rep@theorem}}}
\makeatother
\newreptheorem{theorem}{Theorem}
\newreptheorem{corollary}{Corollary}

\newcommand{\R}{\mathbb{R}} 
\def \Rn{{\R^n}}
\newcommand{\hid}{m}
\def \Rnhi{{\R^{n\hid}}}


\usepackage{comment}
\def \B{B_2^n}
\newcommand{\N}{\mathbb{N}}
\def \E{\mathbb E}

\def \s{\mathbb{S}^{n-1}}
\def \S{\mathbb{S}^{n\hid-1}}

\newcommand{\vol}{\text{\rm Vol}}


\def \pp{\Pi^{\circ}}

\def \cov{g_{K}}

\def \conv{\operatorname{conv}}


\def \PP{\Pi^{\circ,\hid}}

\def \Cov{g_{K,\hid}}
\mathtoolsset{showonlyrefs}

\newtheorem{theorem}{Theorem}[section]
\newtheorem{definition}[theorem]{Definition}
\newtheorem{lemma}[theorem]{Lemma}
\newtheorem{proposition}[theorem]{Proposition}
\newtheorem{corollary}[theorem]{Corollary}

\title[Higher Order Projection Body]{Some comments on the \\ mth-order Projection Bodies}

\author[D. LANGHARST]{Dylan Langharst}
\address{Carnegie Mellon University\\ Department of Mathematical Sciences \\ Pittsburgh, PA 15213, USA\\ OrcID: 0000-0002-4767-3371}
\email{dlanghar@andrew.cmu.edu}

\thanks{MSC 2020 Classification: 52A39, 52A40;  Secondary: 28A75.
Keywords: Projection bodies, stability, mixed projection bodies, volume ratio}

\begin{document}

\begin{abstract}
The celebrated Petty's projection inequality is a sharp upper bound for the volume of the polar projection body of a convex body. Lutwak introduced the concept of mixed projection bodies and extended Petty's projection inequality. Alonso-Guti\'{e}rrez later did a stability result for Petty's projection inequality.

In 1970, Schneider introduced the $m$th-order setting and extended the difference body to that setting. In a previous work, we, working with Haddad, Putterman, Roysdon, and Ye, established an extension of the projection body operator to this setting. In this note, we continue this study for the mixed projection body operator as well as the question of stability.
\end{abstract}

\maketitle

\vspace{-0.75cm}

\maketitle

\section{Projection Bodies}
One of the earliest inequalities one encounters in the study of geometric shapes is the isoperimetric inequality: let $A\subset\R^n$ be a set of finite perimeter and volume. Then,
\begin{equation}
\label{eq:iso}
    \vol_{n-1}(\partial A)\geq n\omega_n^\frac{1}{n}\vol_n(A)^\frac{n-1}{n},
\end{equation}
with equality if and only if $A$ is a Euclidean ball up to null sets. Some definitions are in order; here, $\R^n$ is the $n$-dimensional Euclidean space, $\B$ is the Euclidean unit ball with volume (Lebesgue measure) $\omega_n$ and boundary $\s=\partial \B$ the unit sphere, $\vol_n(A)$ is the $n$-dimensional Lebesgue measure of $A$, and $\vol_{n-1}(\partial A)$ is the surface area of $A$. For now, we do not define surface area so precisely. 

A flagrant handicap of \eqref{eq:iso} is the fact that it is not affine invariant: If $A$ is replaced by $TA$, where $T$ is a volume-preserving affine transformation, then the left-hand side of \eqref{eq:iso} may change, but the right-hand side will not. A natural question is then, therefore: does there exist an affine invariant isoperimetric inequality? In the course of this article, we hope to illustrate that the setting of \textit{convex} geometry is a natural path forward when dealing with these types of questions. We recall that $K\subset\R^n$ is a convex body if it is convex, compact, and has non-empty interior. We can now give a more precise definition of surface area: 
\begin{equation}
\label{eq:surface_area}
    \vol_{n-1}(\partial K)=\lim_{\epsilon \to 0}\frac{\vol_n(K+\epsilon \B)-\vol_n(K)}{\epsilon}.
\end{equation}
Here, $A+B=\{a+b:a\in A,b\in B\}$ is the Minkowski sum of two Borel sets. We will also consider in certain instances compact, convex sets that may have empty interior; implicitly, these are assumed to not be the empty set.

A natural solution to an affine invariant isoperimetric inequality for convex bodies is {\bf Petty's projection inequality:} for a convex body $K$, one has
\begin{equation}
    \label{eq:petty_projection}
    \vol_n(K)^{n-1}\vol_n(\Pi^\circ K)\leq \left(\frac{\omega_n}{\omega_{n-1}}\right)^n,
\end{equation}
with equality if and only if $K$ is an ellipsoid. This was shown by Petty in 1971 \cite{CMP71}. The body $\pp K$ is the polar projection body of $K$, which is an origin-symmetric convex body derived from $K$. Furthermore, as we will see from its definition, the left-hand side of \eqref{eq:petty_projection} is affine invariant. Before we delve into that, we first elucidate how \eqref{eq:petty_projection} implies the isoperimetric inequality \eqref{eq:iso} for convex bodies.

Recall that the volume of a convex body $M\subset\R^n$ is given by the formula
\begin{equation}
\vol_n(M) = \frac{1}{n}\int_{\s} \|\theta\|_M^{-n}d\theta,
\label{eq:polar_2}
\end{equation}
where the gauge, or pseudo-norm, of $M$ is $$\|\theta\|_M=\inf\{r>0:\theta\in rM\}, \quad \theta\in\s.$$ By setting $M=\pp K$ in \eqref{eq:polar_2} and applying Jensen's inequality to the (still hidden) definition of the polar projection body, one arrives at \textbf{Petty's isoperimetric inequality} \cite{petty61_1,CMP71}, which asserts that, for every convex body $K\subset\R^n,$
\begin{equation}\vol_n(\pp K)\vol_{n-1}(\partial K)^n \geq \omega_n \left(\frac{\omega_n}{\omega_{n-1}}\right)^n,
\label{eq:petty_theorem}
\end{equation}
with equality if and only if $\Pi^\circ K$ is a dilate of the Euclidean ball. We now see that combining \eqref{eq:petty_projection} with \eqref{eq:petty_theorem} yields the classical isoperimetric inequality, \eqref{eq:iso}.

To define $\pp K$, we first discuss a generalization of \eqref{eq:surface_area}. Let $L\subset \R^n$ be a compact, convex set. Then, the mixed volume of $K$ and $L$ is given by 
\begin{equation}
\label{eq:mixed_volume}
\begin{split}
    V_n(K[n-1],L):=\frac{1}{n}\lim_{\epsilon \to 0}\frac{\vol_n(K+\epsilon L)-\vol_n(K)}{\epsilon}
    =\frac{1}{n}\int_{\s}h_L(u)d\sigma_K(u).
\end{split}
\end{equation}
Here, $h_L(u)=\sup_{y\in L}\langle y,u\rangle$ is the support function of $L$ and $\sigma_K$ is the surface area measure of $K$: for $E\subset \s$ Borel,
\[
\sigma_K(E)=\mathcal{H}^{n-1}\left(\left\{y\in\partial K: K \text{ has an outer-unit normal at }y \text{ in } E\right\}\right),
\]
where $\mathcal{H}^{n-1}$ is the $(n-1)$-dimensional Hausdorff measure on $\partial K$. While on the topic, we mention that the mixed volumes satisfy \textit{Minkowski's first inequality}: for two convex bodies $K,M\subset \R^n,$
\begin{equation}V_n(K[n-1],M)^n \geq \vol_n(K)^{n-1}\vol_n(M),
\label{eq:min_first}
\end{equation}
with equality if and only if $K$ and $M$ are homothetic. Setting $M=\B$ yields \eqref{eq:iso}.

For $a,b\in\R^n,$ we denote by $[a,b]=\{(1-\lambda)a+\lambda b:\lambda\in[0,1]\}$ the line segment connecting $a$ and $b$. The polar projection body $\pp K$ of a convex body $K\subset \R^n$ is the unique origin-symmetric convex body whose gauge is given by
\[
\|\theta\|_{\pp K} = nV_n\left(K[n-1],\left[-\frac{\theta}{2},\frac{\theta}{2}\right]\right)=\frac{1}{2}\int_{\s}|\langle \theta,u \rangle|d\sigma_K(u), \quad \theta\in\s.
\]
Geometrically, $nV_n\left(K[n-1],\left[-\frac{\theta}{2},\frac{\theta}{2}\right]\right)$ equals $\vol_{n-1}(P_{\theta^\perp} K)$, where $P_{\theta^\perp} K$ is the orthogonal projection of $K$ onto $\theta^\perp$, the hyperplane through the origin orthogonal to $\theta$. Using the definition of $\pp K$, one can verify that $K\mapsto \vol_n(K)^{n-1}\vol_n(\pp K)$ is an affine invariant functional on the set of convex bodies.

We next discuss stability of Petty's projection inequality: if $K$ is ``close", in some quantitative sense, to an ellipsoid, how far is one from equality in \eqref{eq:petty_projection}? The volume ratio of a convex body $K$ is given by
\begin{equation}
    \mathrm{v.r}(K) = \left(\frac{\vol_n(K)}{\vol_n(\mathcal{E})}\right)^\frac{1}{n},
\end{equation}
where $\mathcal{E}$ is the maximal volume ellipsoid contained in $K$. The ellipsoid $\mathcal{E}$ exists, and is unique, as shown in a classical theorem by Fritz John. Using $K\supseteq \mathcal{E}$, we have $\mathrm{v.r}(K)\geq 1$. Supposing that $K$ is translated so it has center of mass at the origin, one has $K\subset n \mathcal{E}$, which yields, by the translation invariance of the Lebesgue measure, that $\mathrm{v.r}(K) \leq n$ for all convex bodies $K$.

We say $K$ is in John position if $\mathcal{E}$ is $\B$. For an arbitrary convex body $K\subset \R^n$, there exists, by John's theorem, a unique, up to composition with an orthogonal transformation, affine transformation $A$ such that $AK$ in John position. Therefore, 
\begin{equation}
\label{eq:john_volume}
    \mathrm{v.r}(K) = \left(\frac{\vol_n(A K)}{\vol_n(\B)}\right)^\frac{1}{n}.
\end{equation}
Using $\mathrm{v.r}(K)$, it was shown by Alonso-Guti\'{e}rrez \cite{AG14} that
\begin{equation}
\label{eq:reverse_petty}
\mathrm{v.r}(K)^{-n}\left(\frac{\omega_n}{\omega_{n-1}}\right)^n\leq \vol_n(K)^{n-1}\vol_{n}(\pp K).\end{equation}
This should be compared to Zhang's projection inequality, proven by Zhang in 1991 \cite{Zhang91}:
\begin{equation}\label{e:Zhang_ineq}
\frac{1}{n^n} \binom{2n}{n}\leq\vol_n(K)^{n-1}\vol_n(\Pi^\circ K).
\end{equation}
There is equality if and only if $K$ is an $n$-dimensional simplex. B\"or\"oczky also considered stability for Petty's projection inequality using the Banach-Mazur distance instead of the volume ratio \cite{BK13}.
\section{Higher-Order Projection Bodies}

The purpose of this note is to establish a version of \eqref{eq:reverse_petty} for a recent generalization of polar projection bodies. To motivate this generalization, we first recall the \textit{covariogram function} of a convex body $K$ in $\R^n$:
	\begin{equation}\label{e:covario}
	    \cov(x)=\vol_n\left(K\cap(K+x)\right), \quad x\in\R^n.
	\end{equation}
 The covariogram function is a vital tool in geometric tomography. For example, the fact that it is $(1/n)$-concave on its support, which is the difference body of $K$, is the key in Chakerian's \cite{Cha67} proof of the Rogers-Shephard inequality \cite{RS57}. The relevance of the covariogram function to our current considerations is the following result by Matheron \cite{MA}: for $\theta\in\s$, one has
 $$\frac{\mathrm{d}\cov(r\theta)}{\mathrm{d}r}\bigg|_{r=0^+}=-\|\theta\|_{\pp K}.$$
In fact, the above variational formula coupled again with the $(1/n)$-concavity of the covariogram function can be used to prove \eqref{e:Zhang_ineq}, see \cite{GZ98}. The reader is recommended to see the excellent survey by Bianchi \cite{GB23} for more on the intricacies of the covariogram function.

In 1970, Schneider \cite{Sch70} introduced for $m\in\N$ the $m$th-order difference body and established the $m$th-order Rogers-Shephard inequality. Along the way, he defined the $m$th-order covariogram function: using the notation $\bar x=(x_1,\dots,x_\hid)$, for $x_i\in\R^n$,
\begin{equation}
\Cov(\overline{x})=\vol_n\left(K\cap\bigcap_{i=1}^\hid (x_i+K)\right), \quad \bar x \in (\R^n)^m.
\label{eq_vol_ell_co}
\end{equation}
Henceforth, we identify $(\R^n)^m$ with $\Rnhi$. It is natural to determine the variation of $\Cov$. To this end, we define a polytope in $\R^n$ from a unit vector in $\S$ in the following way: for $\bar\theta=(\theta_1,\dots,\theta_\hid)\in\S,$ set $$C_{\bar\theta}=\conv_{1\leq i \leq \hid}[o,\theta_i],$$ where $o\in\R^n$ is the origin and $$\conv_{1\leq i \leq \hid}(A_i)=\left\{\sum_{i=1}^m\lambda_i x_i: x_i\in A_i, \lambda_i\in [0,1], \sum_{i=1}^m\lambda_i=1\right\}$$ denotes the convex hull of convex sets $A_1,\dots,A_\hid.$ We will need later that  $$h_{C_{-\bar\theta}}(v)=\max_{1 \leq i \leq \hid} \langle \theta_i, v \rangle_{-}, \qquad v\in \R^n, \quad\bar\theta=(\theta_1,\dots,\theta_m)\in\S,$$ where $a_-=\max\{0,-a\}, a \in \R.$

We, working with Haddad, Putterman, Roysdon, and Ye \cite{HLPRY25}, showed the following. 
\begin{theorem} \label{t:variationalformula}
Fix $n,m\in\N$. Let $K\subset\R^n$ be a convex body. Then, for every direction $\bar{\theta} = (\theta_1,\dots,\theta_{\hid})$ $\in \S$:
\[
\frac{\mathrm{d}}{\mathrm{d}r} \Cov (r\bar{\theta}) \bigg|_{r=0^+}
= -nV_n(K[n-1],C_{-\bar\theta})=-\int_{\s} \max_{1 \leq i \leq \hid} \langle \theta_i, u \rangle_{-}d\sigma_K(u).\]
\end{theorem}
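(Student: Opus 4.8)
The plan is to recognize the intersection defining $\Cov$ as a \emph{relative inner parallel body} of $K$ and then differentiate it. To begin, the second equality in the statement costs nothing: applying \eqref{eq:mixed_volume} with $L=C_{-\bar\theta}$ gives $nV_n(K[n-1],C_{-\bar\theta})=\int_{\s}h_{C_{-\bar\theta}}(u)\,d\sigma_K(u)$, and the support-function identity $h_{C_{-\bar\theta}}(v)=\max_{1\le i\le\hid}\langle\theta_i,v\rangle_-$ recorded above rewrites the right-hand side as $\int_{\s}\max_{1\le i\le\hid}\langle\theta_i,u\rangle_-\,d\sigma_K(u)$. Hence everything reduces to the first equality, i.e. to computing the right derivative at $r=0$ of $g(r):=\Cov(r\bar\theta)=\vol_n\big(K\cap\bigcap_{i=1}^{\hid}(r\theta_i+K)\big)$. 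Writing $M\ominus N=\{y:y+N\subseteq M\}$ for Minkowski erosion, the elementary identities $M\ominus N=M\ominus\conv(N)$ (valid since $M$ is convex), $M\ominus(N_1\cup N_2)=(M\ominus N_1)\cap(M\ominus N_2)$, and $M\ominus\{o\}=M$, together with $\conv\{o,-r\theta_1,\dots,-r\theta_{\hid}\}=rC_{-\bar\theta}$, combine to give
\[
K\cap\bigcap_{i=1}^{\hid}(r\theta_i+K)=K\ominus\{o,-r\theta_1,\dots,-r\theta_{\hid}\}=K\ominus\big(rC_{-\bar\theta}\big).
\]
Since $o\in C_{-\bar\theta}\subseteq\B$, for all sufficiently small $r>0$ one has $K\ominus r\B\subseteq K\ominus rC_{-\bar\theta}\subseteq K$, so $K\ominus rC_{-\bar\theta}$ is a convex body converging to $K$ in the Hausdorff metric as $r\to0^+$, and $g(0)-g(r)=\vol_n(K)-\vol_n(K\ominus rC_{-\bar\theta})$.

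It then suffices to establish the first-order expansion $\vol_n(K)-\vol_n(K\ominus rC_{-\bar\theta})=nr\,V_n(K[n-1],C_{-\bar\theta})+o(r)$. For the lower bound on $\liminf_{r\to0^+}\frac{g(0)-g(r)}{r}$ I would use the opening inclusion $(K\ominus rC_{-\bar\theta})+rC_{-\bar\theta}\subseteq K$: taking volumes, retaining the first two terms of the (nonnegative-coefficient) Steiner--Minkowski polynomial of $\vol_n\big((K\ominus rC_{-\bar\theta})+rC_{-\bar\theta}\big)$, dividing by $r$, and letting $r\to0^+$, the Hausdorff-continuity of mixed volumes gives $\liminf_{r\to0^+}\frac{g(0)-g(r)}{r}\ge nV_n(K[n-1],C_{-\bar\theta})$.

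For the matching upper bound I would analyze the weighted boundary layer
\[
K\setminus\big(K\ominus rC_{-\bar\theta}\big)=\{y\in K:\ \langle y,u\rangle>h_K(u)-r\,h_{C_{-\bar\theta}}(u)\ \text{ for some }u\in\s\},
\]
which, as $h_{C_{-\bar\theta}}\le 1$, lies within Euclidean distance $r$ of $\partial K$. Applying the coarea formula to the concave inner gauge $y\mapsto\sup\{t\ge0:y+tC_{-\bar\theta}\subseteq K\}$ --- equivalently, changing variables via the metric projection onto $\partial K$ --- expresses the volume of this layer as $\int_{\partial K}\big(\text{width of the layer along the inner normal at }z\big)\,d\mathcal{H}^{n-1}(z)+O(r^2)$; since $K$ lies below each of its supporting hyperplanes, the width at a boundary point with outer normal $u$ equals $r\,h_{C_{-\bar\theta}}(u)+O(r^2)$, so integrating against the definition of $\sigma_K$ yields $\vol_n(K)-\vol_n(K\ominus rC_{-\bar\theta})\le r\int_{\s}h_{C_{-\bar\theta}}(u)\,d\sigma_K(u)+O(r^2)=nr\,V_n(K[n-1],C_{-\bar\theta})+O(r^2)$. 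Combined with the lower bound, this proves the theorem. (Equivalently, once the erosion description is in hand one may simply quote the classical variational formula $\frac{\mathrm{d}}{\mathrm{d}r}\vol_n(K\ominus rL)\big|_{r=0^+}=-nV_n(K[n-1],L)$ valid when $o\in L$; or estimate $\vol_n\big(\bigcup_{i=1}^{\hid}(K\setminus(r\theta_i+K))\big)$ directly, its local width near a boundary point with normal $u$ being $r\max_{1\le i\le\hid}\langle\theta_i,u\rangle_-$ --- the same computation.)

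I expect the upper bound to be the only genuine obstacle. The boundary layer is an uncountable union of thin caps, each possibly of order lower than $r$ yet collectively of order $r$, so a real measure-theoretic input is required: the coarea formula (or the almost-everywhere differentiability and Lipschitz structure of the metric projection onto $\partial K$), together with the routine fact that the non-smooth part of $\partial K$ is $\sigma_K$-null and contributes negligibly. The lower bound is by contrast soft and purely Brunn--Minkowski-theoretic, and the algebraic reduction to $K\ominus rC_{-\bar\theta}$ is entirely elementary.
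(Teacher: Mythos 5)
This theorem is quoted in the paper from \cite{HLPRY25} without proof, so there is no in-paper argument to compare against; I can only assess your proposal on its own terms. Your overall architecture is sound and is, in essence, the natural (and I believe the original) route. The reduction $K\cap\bigcap_{i=1}^{\hid}(r\theta_i+K)=K\ominus rC_{-\bar\theta}$ is correct and airtight, as is the observation that the second equality in the statement is just \eqref{eq:mixed_volume} together with the recorded formula for $h_{C_{-\bar\theta}}$. Your lower bound, via $(K\ominus rC_{-\bar\theta})+rC_{-\bar\theta}\subseteq K$, nonnegativity of the coefficients in the Minkowski polynomial, and Hausdorff continuity of mixed volumes, is complete.

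The one place I would push back is the upper bound, and you have correctly diagnosed it as the only real content. The coarea/metric-projection argument as written is a sketch with genuine unaddressed difficulties: the ``width of the layer along the inner normal at $z$'' need not equal $r\,h_{C_{-\bar\theta}}(u)+O(r^2)$ pointwise (near edges and vertices the defining inequality $\langle y,u\rangle>h_K(u)-rh_{C_{-\bar\theta}}(u)$ can be activated by normals $u$ other than the one at the nearest boundary point, so the layer is not a tubular neighborhood of uniformly controlled thickness), the inner gauge $y\mapsto\sup\{t:y+tC_{-\bar\theta}\subseteq K\}$ has $|\nabla|\to\infty$ on the large set of directions where $h_{C_{-\bar\theta}}$ vanishes, and the $O(r^2)$ error terms are asserted rather than derived. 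The clean way to close this --- which you mention only parenthetically --- is to observe that $K\ominus rC_{-\bar\theta}$ is exactly the Wulff shape (Aleksandrov body) of the function $h_K-rh_{C_{-\bar\theta}}$, so that Aleksandrov's variational lemma \cite[Lemma 7.5.3]{Sh1} gives the two-sided derivative $\frac{\mathrm{d}}{\mathrm{d}r}\vol_n(K\ominus rC_{-\bar\theta})\big|_{r=0}=-\int_{\s}h_{C_{-\bar\theta}}\,d\sigma_K=-nV_n(K[n-1],C_{-\bar\theta})$ in one stroke (after translating $K$ so that $o\in\operatorname{int}K$, which is harmless since both $\Cov$ and the mixed volume are translation invariant). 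With that citation your proof is complete and, in fact, shorter than the two-sided Dini-derivative scheme you outline; without it, the upper half of your argument does not yet constitute a proof.
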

This theorem then motivated the following generalization of the polar projection body. 
\begin{definition}
\label{def:higher_proj}
    Fix $n,\hid\in\N$. Let $K\subset\R^n$ be a convex body. Then, its \textit{$\hid$th-order polar projection body} $\PP K$ is the $n\hid$-dimensional convex body containing the origin in its interior whose gauge function is defined, for $\bar{\theta}=(\theta_1,\dots,\theta_{\hid})\in\S$, as
\begin{align*}\|\bar{\theta}\|_{\PP K}= nV_n(K[n-1],C_{-\bar\theta})
=\int_{\s} \max_{1 \leq i \leq \hid} \langle \theta_i, u \rangle_{-}d\sigma_K(u).
\end{align*}
\end{definition}
With this definition in hand, we can rephrase Theorem~\ref{t:variationalformula} as, for all $\bar\theta\in\S,$ $$\frac{\mathrm{d}}{\mathrm{d}r} \Cov (r\bar{\theta}) \bigg|_{r=0^+} = -\|\bar\theta\|_{\PP K}.$$
Before moving on, let us mention some properties of $\PP K$, to better familiarize ourselves. First, note that $\Pi^{\circ,1} K=\pp K$. The translation invariance of mixed volumes shows that $\PP (K+x) = \PP K$ for every $x\in\Rn.$ For $u\in\s,$ let $u_j=(o,\dots,o,u,o,\dots,o)\in\S$, where $u$ is in the $j$th coordinate. We then see that
$$\|u_j\|_{\PP K}=nV_n(K[n-1],[o,-u])=\|u\|_{\pp K}.$$
Since this is independent of $j$, this shows that the intersection of $\PP K$ with any of the $\hid$ copies of $\R^n$ is $\pp K$, but, by taking $u^m=\frac{1}{\sqrt{m}}(u,\dots,u)$, the fact that $$\|u^m\|_{\PP K}=nV_n\left(K[n-1],\left[o,-\frac{u}{\sqrt{m}}\right]\right)=\frac{1}{\sqrt{m}}\|u\|_{\pp K},$$ yields $\PP K$ is not merely the convex hull of said sections. 

The case of $K=\B$ also deserves special attention. We recall that the mean width of a compact, convex set $L\subset\R^n$ is given by
\begin{align}
\label{eq:mean_wdith}
W_n(L)=\frac{1}{n\omega_n}\int_{\s}h_L(\theta)d\theta=\frac{1}{\omega_n}V_n(\B[n-1],L),
\end{align}
which is precisely the average value of $h_L$ with respect to the Haar measure on $\s$. The gauge of $\PP\B$ is then
\[\|\bar x\|_{\PP\B} = n\omega_n W_n(C_{\bar x}).\]
Therefore, $\PP \B$ is a probabilistic object whose precise shape, even its volume, is difficult to determine. In fact, using \eqref{eq:polar_2}, we see that $$\vol_{nm}(\PP \B) = \frac{\omega_{nm}}{(n\omega_n)^{nm}}\E[W_n(C_{\bar\Theta})^{-nm}],$$
where the expectation is taken with respect to the Haar measure on $\S$.

Motivated by Theorem~\ref{t:variationalformula}, we, working with Haddad, Putterman, Roysdon, and Ye, showed the following extension of Petty's projection and Zhang's projection inequalities \cite{HLPRY25}.
\begin{theorem}[Zhang's projection and Petty's projection inequalities for $m$th-order projection bodies]
\label{t:pettyprojectioninequality}
    Fix $m,n\in\N$ and let $K\subset\R^n$ be a convex body. Then,
    \[\frac{1}{n^{n\hid}}\binom{n\hid+n}{n}\leq \vol_n(K)^{n\hid-\hid}\vol_{n\hid}\left( \PP K \right) \leq \frac{\omega_{nm}}{n^{nm}\omega_n^{\hid}}\E[W_n(C_{\bar\Theta})^{-nm}].\]
    There is equality in the first inequality if and only if $K$ is an $n$-dimensional simplex, and there is equality in the second inequality if and only if $K$ is an ellipsoid.
\end{theorem}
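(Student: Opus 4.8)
The plan is to prove the two estimates by separate arguments; for the second it will help first to record that the functional $\mathcal F(K):=\vol_n(K)^{n\hid-\hid}\vol_{n\hid}(\PP K)$ is invariant under invertible affine maps. Translation invariance is the remark following Definition~\ref{def:higher_proj}. For a linear map $T$ with $\det T\neq 0$, let $\tilde T$ be the block-diagonal map on $\Rnhi$ made of $\hid$ copies of $T$; from $V_n(TA_1,\dots,TA_n)=|\det T|\,V_n(A_1,\dots,A_n)$ and $T^{-1}C_{-\bar\theta}=C_{-\tilde T^{-1}\bar\theta}$ we get $\|\bar\theta\|_{\PP(TK)}=|\det T|\,\|\tilde T^{-1}\bar\theta\|_{\PP K}$, i.e. $\PP(TK)=|\det T|^{-1}\tilde T(\PP K)$, and since $\det\tilde T=(\det T)^{\hid}$ the powers of $|\det T|$ in $\mathcal F(TK)$ cancel.

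For the left-hand (Zhang-type) inequality I would carry the Gardner--Zhang covariogram argument over to the $\hid$th-order covariogram $\Cov$, whose support is Schneider's $\hid$th-order difference body $\D K\subset\Rnhi$. The Brunn--Minkowski argument that makes $\cov^{1/n}$ concave also makes $\Cov^{1/n}$ concave on $\D K$, with maximal value $\Cov(\bar o)^{1/n}=\vol_n(K)^{1/n}$; and Fubini gives $\int_{\D K}\Cov(\bar x)\,d\bar x=\vol_n(K)^{\hid+1}$. By Theorem~\ref{t:variationalformula} the right derivative of $r\mapsto\Cov(r\bar\theta)^{1/n}$ at $0$ equals $-\|\bar\theta\|_{\PP K}/\bigl(n\,\vol_n(K)^{(n-1)/n}\bigr)$, so the fact that a concave function lies below its tangent line at $0$ gives, for every $\bar\theta\in\S$,
\[
\Cov(r\bar\theta)\ \le\ \Bigl(\vol_n(K)^{1/n}-r\,\frac{\|\bar\theta\|_{\PP K}}{n\,\vol_n(K)^{(n-1)/n}}\Bigr)_{+}^{\,n},\qquad r\ge 0.
\]
Integrating this bound in polar coordinates on $\Rnhi$ and using $\int_0^1(1-t)^n t^{n\hid-1}\,dt=\bigl(n\hid\binom{n\hid+n}{n}\bigr)^{-1}$ together with $\int_{\S}\|\bar\theta\|_{\PP K}^{-n\hid}\,d\bar\theta=n\hid\,\vol_{n\hid}(\PP K)$ (the analogue of \eqref{eq:polar_2}) yields $\vol_n(K)^{\hid+1}\le n^{n\hid}\,\vol_n(K)^{n\hid+1}\vol_{n\hid}(\PP K)/\binom{n\hid+n}{n}$, which rearranges to the first inequality. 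Equality forces $r\mapsto\Cov(r\bar\theta)^{1/n}$ to be affine on its interval of positivity for almost every $\bar\theta$, i.e. $\Cov^{1/n}$ to be conical over $\D K$; as in the case $\hid=1$ this should characterize simplices, which I would confirm by passing to a planar section or by invoking the equality case of the $\hid$th-order Rogers--Shephard inequality.

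The right-hand (Petty-type) inequality is where I expect the real difficulty, because the cheap route is not sharp. From $\max_i\langle\theta_i,u\rangle_-\ge\langle\theta_j,u\rangle_-$ and $\int_{\s}\langle\theta_j,u\rangle_-\,d\sigma_K(u)=\tfrac12\int_{\s}|\langle\theta_j,u\rangle|\,d\sigma_K(u)=\|\theta_j\|_{\pp K}$ one only gets the inclusion $\PP K\subseteq(\pp K)^{\times\hid}$, whence $\vol_{n\hid}(\PP K)\le\vol_n(\pp K)^{\hid}$ and, by the classical Petty inequality \eqref{eq:petty_projection}, $\mathcal F(K)\le(\omega_n/\omega_{n-1})^{n\hid}$; but the asserted bound $\frac{\omega_{n\hid}}{n^{n\hid}\omega_n^{\hid}}\E[W_n(C_{\bar\Theta})^{-n\hid}]$ equals $\mathcal F(\B)$ and is strictly smaller, since $\PP\B$ is a proper subset of $(\pp\B)^{\times\hid}$. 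To obtain the sharp constant I would follow Petty's symmetrization scheme: prove $\vol_{n\hid}(\PP(S_uK))\ge\vol_{n\hid}(\PP K)$ for every Steiner symmetrization $S_u$ of $K$ in a direction $u\in\s$ (equivalently, that $\mathcal F$ does not decrease under $S_u$), then iterate Steiner symmetrizations to bring $K$ to a Euclidean ball of the same volume and conclude from the Hausdorff continuity of $K\mapsto\PP K$ and the affine invariance above. The crux --- and the step I expect to be the main obstacle --- is the symmetrization inequality itself: as when $\hid=1$, one wants an inclusion $\P(S_uK)\subseteq\Sigma_u(\P K)$ for an appropriate symmetrization $\Sigma_u$ of the $\hid$th-order projection body acting on the lifted directions $u_1,\dots,u_{\hid}\in\S$, so that passage to polars increases volume; alternatively one can argue via shadow systems in the spirit of Campi--Gronchi, showing $t\mapsto\vol_{n\hid}(\PP K_t)^{-1}$ is convex along a shadow system of $K$. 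In either approach the heart of the matter is understanding how $\PP$ intertwines a symmetrization (or shadow movement) of $K$ in $\Rn$ with a corresponding one of $\PP K$ in $\Rnhi$, and the ellipsoid equality case would then come out of the rigidity of the symmetrization step, exactly as in Petty's theorem.
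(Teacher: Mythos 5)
First, note that this paper does not prove Theorem~\ref{t:pettyprojectioninequality}; it quotes it from \cite{HLPRY25}, recording only two ingredients of that proof (the affine invariance of Proposition~\ref{p:affine_invar_Zhang} and the isoperimetric-type bound of Theorem~\ref{t:petty_classic}). Measured against the actual proof in \cite{HLPRY25}, your affine-invariance computation and your treatment of the left-hand (Zhang) inequality are correct and follow essentially the same route: $\Cov^{1/n}$ is concave on the $m$th-order difference body by the same Brunn--Minkowski argument as for $m=1$, Fubini gives $\int\Cov=\vol_n(K)^{\hid+1}$, the tangent-line bound at $r=0$ uses Theorem~\ref{t:variationalformula}, and the Beta-integral bookkeeping you wrote down does produce the constant $n^{-n\hid}\binom{n\hid+n}{n}$. (Your equality discussion for the simplex is only asserted, not proven, but the inequality itself is complete.)

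The genuine gap is the right-hand (Petty) inequality, which is the hard half of the theorem. You correctly diagnose that the cheap inclusion $\PP K\subseteq(\pp K)^{\times\hid}$ is not sharp and that the sharp constant $\frac{\omega_{n\hid}}{n^{n\hid}\omega_n^{\hid}}\E[W_n(C_{\bar\Theta})^{-n\hid}]$ is the value of the functional at the ball, and you correctly identify the Steiner-symmetrization strategy. But you then stop at exactly the point where the proof lives: the claim that $\vol_{n\hid}(\PP(S_uK))\geq\vol_{n\hid}(\PP K)$, which you yourself label ``the main obstacle,'' is neither proved nor reduced to anything established. This is not a routine verification. One must define the correct symmetral $\Sigma_u$ in $\R^{n\hid}$ (acting simultaneously on all $\hid$ blocks), prove the inclusion $\P(S_uK)\subseteq\Sigma_u(\P K)$ at the level of support functions of the bodies $C_{-\bar\theta}$ (this requires a nontrivial pointwise inequality for the functions $u\mapsto\max_i\langle\theta_i,u\rangle_-$ under the splitting of $\sigma_K$ induced by the symmetrization, carried out in \cite{HLPRY25}), verify that $\Sigma_u$ does not decrease the volume of the polar, and then justify convergence of iterated symmetrals together with the rigidity needed for the ellipsoid equality case. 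None of these steps appears in your proposal, so as written it establishes only the non-sharp bound $\left(\omega_n/\omega_{n-1}\right)^{n\hid}$ and the Zhang half, not the theorem.
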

The first step in proving Theorem~\ref{t:pettyprojectioninequality} was to verify $K\mapsto \vol_n(K)^{n\hid-\hid}\vol_{n\hid}(\PP K)$ is an affine invariant functional.

\begin{proposition}[Petty Product for $m$th-Order Projection Bodies]
\label{p:affine_invar_Zhang}
    Fix $n,\hid\in\mathbb{N}$, Then, the functional $$K\mapsto \vol_n(K)^{n\hid-\hid}\vol_{n\hid}(\PP K)$$ on the set of convex bodies in $\R^n$ is invariant under affine transformations.    
\end{proposition}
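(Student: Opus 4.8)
\section*{Proof Proposal for Proposition~\ref{p:affine_invar_Zhang}}

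The plan is to split an affine map into its linear and translation parts. Translation invariance of the functional is immediate from the already-noted fact $\PP(K+x)=\PP K$ together with $\vol_n(K+x)=\vol_n(K)$, so everything reduces to understanding how $\PP K$ transforms under $T\in GL_n(\R)$. The first step is to extend the gauge of $\PP K$ from $\S$ to all of $\Rnhi$ by positive $1$-homogeneity: since $C_{-\lambda\bar x}=\lambda\,C_{-\bar x}$ for $\lambda>0$ and mixed volumes are homogeneous in each slot, the identity $\|\bar x\|_{\PP K}=nV_n(K[n-1],C_{-\bar x})$ from Definition~\ref{def:higher_proj} remains valid for every $\bar x=(x_1,\dots,x_m)\in\Rnhi$, not just unit vectors. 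This is what makes the mixed-volume scaling below applicable.

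The core of the argument is the behaviour of $C_{-\bar x}$ under a linear map. Because $T^{-1}$ is linear it commutes with forming segments and convex hulls, so $T^{-1}C_{-\bar x}=\conv_{1\le i\le m}[o,-T^{-1}x_i]=C_{-\overline{T^{-1}x}}$, where $\overline{T^{-1}x}=(T^{-1}x_1,\dots,T^{-1}x_m)$. Combining this with the transformation rule $V_n(TL_1,\dots,TL_n)=|\det T|\,V_n(L_1,\dots,L_n)$ for mixed volumes gives, for all $\bar x\in\Rnhi$,
\[
\|\bar x\|_{\PP(TK)}=nV_n((TK)[n-1],C_{-\bar x})=|\det T|\cdot nV_n\bigl(K[n-1],C_{-\overline{T^{-1}x}}\bigr)=|\det T|\,\|\overline{T^{-1}x}\|_{\PP K}.
\]
Introducing the block-diagonal map $\widetilde T=T\oplus\cdots\oplus T$ ($m$ copies) on $\Rnhi\cong(\R^n)^m$, we have $\overline{T^{-1}x}=\widetilde T^{-1}\bar x$, and the displayed identity, via $\|y\|_{SM}=\|S^{-1}y\|_M$ and $\|y\|_{\lambda M}=\lambda^{-1}\|y\|_M$, is exactly the statement $\PP(TK)=|\det T|^{-1}\,\widetilde T\bigl(\PP K\bigr)$ as $n\hid$-dimensional convex bodies.

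To finish, I would take $n\hid$-dimensional volumes. Since $\det\widetilde T=(\det T)^{\hid}$, we get $\vol_{n\hid}(\PP(TK))=|\det T|^{-n\hid}\,|\det T|^{\hid}\,\vol_{n\hid}(\PP K)=|\det T|^{\hid-n\hid}\vol_{n\hid}(\PP K)$, while $\vol_n(TK)^{n\hid-\hid}=|\det T|^{n\hid-\hid}\vol_n(K)^{n\hid-\hid}$; the two powers of $|\det T|$ are reciprocal and cancel, giving invariance under $GL_n(\R)$ and hence, with translation invariance, under all invertible affine maps. None of the steps is genuinely deep; the only points demanding care are the exponent bookkeeping, the evaluation $\det\widetilde T=(\det T)^{\hid}$ for the block map, and making sure the homogeneous extension of the gauge is legitimate so that the mixed-volume scaling can be invoked for arbitrary $\bar x\in\Rnhi$.
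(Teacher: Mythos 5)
Your argument is correct: the $1$-homogeneous extension of the gauge, the covariance identity $\PP(TK)=|\det T|^{-1}\,\widetilde T\,\PP K$ with $\widetilde T=T\oplus\cdots\oplus T$, and the exponent count $\det\widetilde T=(\det T)^{\hid}$ all check out, and the powers of $|\det T|$ cancel exactly as you claim. This is essentially the same proof as in the cited source \cite{HLPRY25} (the paper itself imports the proposition without proof), so nothing further is needed.
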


\noindent Along the way, the following generalization of Petty's isoperimetric inequality, \eqref{eq:petty_theorem}, was established.
\begin{theorem}
\label{t:petty_classic}
    Fix $n,m\in\N$. Let $K\subset\R^n$ be a convex body. Then, it holds
    \begin{equation}
    \label{eq:petty_classic}
    \vol_{n\hid}(\PP K)\vol_{n-1}(\partial K)^{n\hid} \geq  \omega_{nm}\E[W_n(C_{\bar\Theta})^{-nm}],
    \end{equation}
There is equality if and only if $\pp K$ $(m=1)$ or $K$ ($m\geq 2$) is a Euclidean ball.
\end{theorem}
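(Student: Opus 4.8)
The plan is to recast \eqref{eq:petty_classic} as a single comparison between a convex functional on probability measures on $\s$, evaluated at the normalized surface area measure of $K$ and at the uniform measure, and then to obtain that comparison by averaging over $O(n)$; the equality analysis falls out of the same computation. \emph{Reduction.} Put $S=\vol_{n-1}(\partial K)=\sigma_K(\s)$ and $\mu_K=S^{-1}\sigma_K$, a probability measure on $\s$ with barycenter $o$. For a probability measure $\mu$ on $\s$ and $\bar\theta=(\theta_1,\dots,\theta_m)\in\S$ set
\[
\ell_{\bar\theta}(\mu)=\int_{\s}\max_{1\le i\le m}\langle\theta_i,u\rangle_-\,d\mu(u)=\int_{\s}h_{C_{-\bar\theta}}(u)\,d\mu(u),\qquad F(\mu)=\int_{\S}\ell_{\bar\theta}(\mu)^{-nm}\,d\bar\theta.
\]
By Definition~\ref{def:higher_proj}, $\|\bar\theta\|_{\PP K}=S\,\ell_{\bar\theta}(\mu_K)$, so the polar coordinate formula \eqref{eq:polar_2}, used in dimension $nm$ for $M=\PP K$, gives $\vol_{nm}(\PP K)\,S^{nm}=\tfrac{1}{nm}F(\mu_K)$. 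With $\nu$ the uniform probability measure on $\s$, from $\int_{\s}h_{C_{-\bar\theta}}\,du=n\omega_n W_n(C_{-\bar\theta})$ (see \eqref{eq:mean_wdith}) and $\mathcal{H}^{n-1}(\s)=n\omega_n$ we get $\ell_{\bar\theta}(\nu)=W_n(C_{-\bar\theta})=W_n(C_{\bar\theta})$ (mean width is reflection invariant), hence $\tfrac{1}{nm}F(\nu)=\omega_{nm}\,\E[W_n(C_{\bar\Theta})^{-nm}]$, using $\mathcal{H}^{nm-1}(\S)=nm\,\omega_{nm}$. Therefore \eqref{eq:petty_classic} is equivalent to $F(\mu_K)\ge F(\nu)$, and it is enough to prove $F(\mu)\ge F(\nu)$ for \emph{every} probability measure $\mu$ on $\s$.

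\emph{The inequality.} For each fixed $\bar\theta$ the map $\mu\mapsto\ell_{\bar\theta}(\mu)$ is affine and $t\mapsto t^{-nm}$ is convex on $[0,\infty]$, so $F$ is convex; and $F$ is $O(n)$-invariant, since $\ell_{\bar\theta}(g_*\mu)=\ell_{g^{-1}\bar\theta}(\mu)$ (with $O(n)$ acting componentwise) and $\bar\theta\mapsto g^{-1}\bar\theta$ is the restriction to $\S$ of a rotation of $\R^{nm}=(\R^n)^m$, hence preserves the surface measure on $\S$. Let $dg$ be the Haar probability measure on $O(n)$ and $\bar\mu=\int_{O(n)}g_*\mu\,dg$. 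Then $\ell_{\bar\theta}(\bar\mu)=\int_{O(n)}\ell_{g^{-1}\bar\theta}(\mu)\,dg$, and Jensen's inequality for $t\mapsto t^{-nm}$ together with Fubini gives
\[
F(\bar\mu)=\int_{\S}\Big(\int_{O(n)}\ell_{g^{-1}\bar\theta}(\mu)\,dg\Big)^{-nm}\!d\bar\theta\ \le\ \int_{O(n)}\Big(\int_{\S}\ell_{g^{-1}\bar\theta}(\mu)^{-nm}\,d\bar\theta\Big)dg\ =\ \int_{O(n)}F(\mu)\,dg\ =\ F(\mu).
\]
As $\bar\mu$ is $O(n)$-invariant, $\bar\mu=\nu$, so $F(\nu)\le F(\mu)$, which is \eqref{eq:petty_classic}. (That $F(\nu)<\infty$, so that the bound is not vacuous, follows from $W_n(C_{-\bar\theta})\ge\frac{1}{n\omega_n}\int_{\s}\langle\theta_{i_0},u\rangle_-\,du\ge\frac{1}{2n\omega_n\sqrt m}\int_{\s}|\langle e_1,u\rangle|\,du>0$, where $|\theta_{i_0}|=\max_i|\theta_i|\ge m^{-1/2}$, or from Theorem~\ref{t:pettyprojectioninequality}.)

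\emph{Equality.} Equality in \eqref{eq:petty_classic} holds exactly when the Jensen step above is an equality for $\bar\theta$ in a set of full measure; by strict convexity of $t\mapsto t^{-nm}$ and continuity of $g\mapsto\ell_{g^{-1}\bar\theta}(\mu_K)$ this forces $g\mapsto\ell_{g^{-1}\bar\theta}(\mu_K)$ to be constant for every $\bar\theta\in\S$, i.e.\ $\bar\theta\mapsto\|\bar\theta\|_{\PP K}$ is invariant under the diagonal action of $O(n)$, equivalently it depends only on the Gram matrix $(\langle\theta_i,\theta_j\rangle)_{i,j}$. Taking $\bar\theta$ with a single nonzero coordinate $\theta$ shows $\|\theta\|_{\pp K}=\tfrac12\int_{\s}|\langle\theta,u\rangle|\,d\sigma_K(u)$ is constant in $\theta$, i.e.\ $\pp K$ is a Euclidean ball; using $\int_{\s}u\,d\sigma_K(u)=o$ this is also sufficient when $m=1$, which is the asserted condition (and recovers \eqref{eq:petty_theorem}). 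For $m\ge 2$ one must upgrade this: exploiting the remaining invariance --- e.g.\ taking $\bar\theta$ supported on a fixed $2$-plane and letting one coordinate tend to $0^+$ gives that $\int_{\{u\in\s:\langle v,u\rangle\ge0\}}\langle w,u\rangle_-\,d\sigma_K(u)$ is the same for all orthonormal pairs $(v,w)$, and more generally $\int_{\s}\max\{\langle\theta_1,u\rangle_-,\langle\theta_2,u\rangle_-\}\,d\sigma_K(u)$ depends only on $|\theta_1|,|\theta_2|,\langle\theta_1,\theta_2\rangle$ --- together with $\int_{\s}u\,d\sigma_K(u)=o$ one concludes that $\sigma_K$ is rotation invariant, hence a constant multiple of spherical Lebesgue measure, so that $K$ is a Euclidean ball by Minkowski's uniqueness theorem; the converse ($K$ a ball $\Rightarrow\mu_K=\nu\Rightarrow$ equality) is immediate.

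The inequality is soft: it rests only on convexity of $F$ and on the fact that the $O(n)$-average of any probability measure on $\s$ is $\nu$. The genuine difficulty is the $m\ge 2$ equality case, namely showing that $O(n)$-invariance of the gauge of $\PP K$ forces $\sigma_K$ to be uniform; this amounts to an injectivity statement for the family of transforms $\sigma\mapsto\big(\bar\theta\mapsto\int_{\s}h_{C_{-\bar\theta}}\,d\sigma\big)$ restricted to rotation-invariant outputs, and is where the main work of the proof lies.
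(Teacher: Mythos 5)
Your proof of the inequality itself is correct and complete: the reduction to $F(\mu_K)\ge F(\nu)$ for the normalized surface area measure, the convexity and diagonal $\mathcal{O}(n)$-invariance of $F$, and the averaging/Jensen step are all sound (one should note that averaging over the full orthogonal group of $\R^{nm}$ would only give the weaker constant $(\E W_n(C_{\bar\Theta}))^{-nm}$, so restricting to the diagonal copy of $\mathcal{O}(n)$ is exactly the right move). This part of the argument is essentially the one in \cite{HLPRY25}, which the paper does not reproduce --- it only cites the inequality and the fact that equality is equivalent to $\PP(O\cdot K)=\PP K$ for all $O\in\mathcal{O}(n)$, and then proves the equality characterization from that. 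Your Jensen equality analysis recovers precisely this intermediate condition, and your treatment of the $m=1$ case (constancy of $\|\theta\|_{\pp K}$, i.e.\ $\pp K$ a ball, is necessary and sufficient) is complete and matches the paper.

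The gap is in the $m\ge 2$ equality case. The passage from ``the gauge of $\PP K$ is invariant under the diagonal $\mathcal{O}(n)$-action'' to ``$K$ is a Euclidean ball'' is exactly the injectivity statement for $\sigma\mapsto\bigl(\bar\theta\mapsto\int_{\s}h_{C_{-\bar\theta}}\,d\sigma\bigr)$ that you name in your closing paragraph, and you do not prove it: you list some identities that the invariance implies (constancy of $\int_{\{\langle v,u\rangle\ge0\}}\langle w,u\rangle_-\,d\sigma_K(u)$ over orthonormal pairs, dependence of the two-vector transform only on Gram data) and then assert that ``one concludes'' $\sigma_K$ is rotation invariant. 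That conclusion is the entire content of the $m\ge2$ case; it is what the paper imports as Proposition~3.5 of \cite{HLPRY25} ($\PP M=\PP K$ implies $M=K+b$ for $m\ge2$), after which the paper finishes with a short center-of-mass argument: $O\cdot K=K+b_O$ for all $O$, the choice $O=-\operatorname{Id}_n$ forces central symmetry, and recentring gives $O\cdot K_0=K_0$ for all $O$. Your route (injectivity at the level of surface area measures plus Minkowski's uniqueness theorem) would also work, but the injectivity is a substantive lemma, not a remark, and as written the $m\ge2$ equality characterization is not established. Either prove that injectivity or cite \cite{HLPRY25}.
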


The equality conditions to Theorem~\ref{t:petty_classic} in the original work \cite{HLPRY25} were left implicit by the proof; we now take a moment to elaborate on them, as they are crucial to our considerations herein. We denote by $\mathcal{O}(n)$ the orthogonal group on $\R^n$.
\begin{proof}[Proof of the equality characterization of Theorem~\ref{t:petty_classic}]
It was shown in \cite[Proof of Theorem 1.5, pg. 36]{HLPRY25} that there is equality in \eqref{eq:petty_classic} if and only if $K$ is a convex body such that, for every $O\in \mathcal{O}(n)$ on $\s$,
\begin{equation}
\PP (O\cdot K) = \PP K.
\label{eq:equality_case}
\end{equation}
First suppose that $\hid =1$. Then, since $\pp (O\cdot K) = O \pp K$ \cite[Theorem 4.1.5]{gardner_book}, we deduce that $\pp K$ is rotationally invariant, and, therefore, a centered Euclidean ball. Now, suppose $\hid \geq 2$. It was shown in \cite[Proposition 3.5]{HLPRY25} that, for such $m$, if $K,M$ are convex bodies such that $\PP M = \PP K$, then $M=K+b$ for some $b\in \R^n$. Applying this to \eqref{eq:equality_case}, we deduce for every $O\in \mathcal{O}(n)$ that there exists $b_O\in \R^n$ such that 
\begin{equation}O\cdot K =K+b_O.
\label{eq:equality_sets}
\end{equation} 
By picking $O=-\operatorname{Id}_n$, where $\operatorname{Id}_n$ is the $n\times n$ identity matrix, we deduce $- K =K+b_{-\operatorname{Id}_n}$. Therefore, $K$ is symmetric about a point. Let us call this point of symmetry $c$, and define $K_0=K-c$. Then, $K_0$ has center of mass at the origin. Inserting $K_0$ into \eqref{eq:equality_sets}, we obtain
\begin{equation}
O\cdot K_0 =K_0+(b_O+c-Oc).
\label{eq:equality_sets_2}
\end{equation} 
Since the left-hand side of \eqref{eq:equality_sets_2} has center of mass at the origin, so does the right-hand side. We deduce that $b_O=Oc-c$, i.e. $O\cdot K_0=K_0$ for every $O\in \mathcal{O}(n)$. Thus, $K_0$ is a dilate of $\B$. We conclude from the fact that $K=K_0 +c$ that $K$ is a Euclidean ball. 
\end{proof}

The first purpose of this note is to ``complete" the above story by showing stability in the $m$th-order setting. Our approach is based on \cite{GP99,Ball91,AG14}. 
\begin{theorem}
\label{t:stab}
    Fix $m,n\in\N$. Let $K\subset\R^n$ be a convex body. Then,
    $$\vol_n(K)^{n\hid-\hid}\vol_{n\hid}(\PP K) \geq \mathrm{v.r}(K)^{-nm} \frac{\omega_{nm}}{n^{nm}\omega_n^{\hid}}\E[W_n(C_{\bar\Theta})^{-nm}].$$
\end{theorem}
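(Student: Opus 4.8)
The plan is to reduce, by affine invariance, to the case in which $K$ is in John position, and then to obtain the inequality directly from Petty's isoperimetric inequality for $m$th-order projection bodies (Theorem~\ref{t:petty_classic}) together with the elementary estimate $\vol_{n-1}(\partial K)\le n\,\vol_n(K)$, valid whenever $\B\subseteq K$. This is the same route by which Alonso-Guti\'errez \cite{AG14} derived \eqref{eq:reverse_petty} from the classical Petty isoperimetric inequality \eqref{eq:petty_theorem}.

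First I would observe that both sides of the asserted inequality are invariant under invertible affine maps $K\mapsto TK$: the left-hand side $\vol_n(K)^{n\hid-\hid}\vol_{n\hid}(\PP K)$ by Proposition~\ref{p:affine_invar_Zhang}, the factor $\mathrm{v.r}(K)^{-nm}$ because the volume ratio is affine invariant (see \eqref{eq:john_volume}), and the remaining factor $\tfrac{\omega_{nm}}{n^{nm}\omega_n^{\hid}}\E[W_n(C_{\bar\Theta})^{-nm}]$ because it does not involve $K$. Invoking John's theorem, I may therefore assume the maximal volume ellipsoid of $K$ equals $\B$, so that $\B\subseteq K$ and $\vol_n(K)=\omega_n\,\mathrm{v.r}(K)^{n}$.

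Since $\B\subseteq K$, one has $h_{\B}\le h_K$ on $\s$, so the integral formula in \eqref{eq:mixed_volume} together with \eqref{eq:surface_area} yields
\[
\vol_n(K)=V_n(K[n-1],K)\ \ge\ V_n(K[n-1],\B)=\frac1n\,\vol_{n-1}(\partial K),
\]
that is, $\vol_{n-1}(\partial K)\le n\,\vol_n(K)$. Feeding this estimate into Theorem~\ref{t:petty_classic} and multiplying through by $\vol_n(K)^{n\hid-\hid}$ gives
\[
\vol_n(K)^{n\hid-\hid}\vol_{n\hid}(\PP K)\ \ge\ \vol_n(K)^{n\hid-\hid}\,\frac{\omega_{nm}\,\E[W_n(C_{\bar\Theta})^{-nm}]}{\bigl(n\,\vol_n(K)\bigr)^{n\hid}}=\frac{\omega_{nm}\,\E[W_n(C_{\bar\Theta})^{-nm}]}{n^{nm}\,\vol_n(K)^{\hid}},
\]
where the last equality uses $n\hid=nm$ and $\vol_n(K)^{n\hid-\hid}/\vol_n(K)^{n\hid}=\vol_n(K)^{-\hid}$. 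It remains only to substitute $\vol_n(K)^{\hid}=\omega_n^{\hid}\,\mathrm{v.r}(K)^{n\hid}=\omega_n^{\hid}\,\mathrm{v.r}(K)^{nm}$, which converts the right-hand side into $\mathrm{v.r}(K)^{-nm}\tfrac{\omega_{nm}}{n^{nm}\omega_n^{\hid}}\E[W_n(C_{\bar\Theta})^{-nm}]$, exactly as claimed.

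The ``hard part'' here is really just bookkeeping: one must verify that the powers of $n$, the factors $\omega_n^{\hid}$ and $\omega_{nm}$, and the expectation $\E[W_n(C_{\bar\Theta})^{-nm}]$ match up, which they do precisely because the constant in Theorem~\ref{t:petty_classic} is normalized so that equality holds there at $K=\B$; no analytic input beyond Theorem~\ref{t:petty_classic} and Proposition~\ref{p:affine_invar_Zhang} is required. As a consistency check, when $\hid=1$ one has $C_{\bar\Theta}=[o,\Theta]$ and $W_n([o,\Theta])=\omega_{n-1}/(n\omega_n)$, so the right-hand side collapses to $\mathrm{v.r}(K)^{-n}(\omega_n/\omega_{n-1})^{n}$ and one recovers Alonso-Guti\'errez's inequality \eqref{eq:reverse_petty}.
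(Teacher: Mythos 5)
Your proof is correct and follows essentially the same route as the paper: both reduce via the affine invariance of Proposition~\ref{p:affine_invar_Zhang} to a position where $\vol_{n-1}(\partial K)\le n\,\vol_n(K)$ holds and then feed this into Theorem~\ref{t:petty_classic}. The only organizational difference is that the paper first records the intermediate bound in terms of the minimal isoperimetric ratio $\partial_K$ (minimal surface area position) and then estimates $\partial_K\le n\omega_n^{1/n}\mathrm{v.r}(K)$ via John position, whereas you pass to John position directly; the resulting inequality is identical.
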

Theorem~\ref{t:stab} reduces to \eqref{eq:reverse_petty} when $\hid=1$. To prove Theorem~\ref{t:stab}, we need another definition. For a convex body $K\subset\R^n$, its minimal isoperimetric ratio is given by
\begin{equation}
\label{eq:min_iso}
    \partial_K:=\min_{T\in GL_n(\R)}\frac{\vol_{n-1}(\partial (TK))}{\vol_n(TK)^\frac{n-1}{n}}.
\end{equation}
Here, $GL_n(\R)$ is the set of $n\times n$, non-singular matrices with real entries. By the isoperimetric inequality \eqref{eq:iso}, $\partial_K>0$; Petty showed that the minimum in \eqref{eq:min_iso} is obtained and that the matrix that obtains said minimum is unique up to composition with orthogonal transformations (see, e.g. \cite[Theorem 2.3.1]{AGA}). We say $K$ is in minimal surface area position if the minimum is obtained at the identity. Theorem~\ref{t:stab} is now an immediate consequence of the following two facts.
The first is a corollary of Theorem~\ref{t:petty_classic} and extends on the $m=1$ case from \cite{GP99}.
\begin{lemma}
    Fix $m,n\in\N$. Let $K\subset\R^n$ be a convex body. Then,
    $$\frac{\vol_n(K)^{n\hid-\hid}\vol_{n\hid}(\PP K)}{ \omega_{nm}\E[W_n(C_{\bar\Theta})^{-nm}]} \geq \partial_K^{-nm}.$$
    There is equality if and only if $K$ is in minimal surface area position and $\pp K$ ($m=1$) or $K$ ($m\geq 2$) is a Euclidean ball.
\end{lemma}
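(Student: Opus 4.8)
The plan is to deduce the Lemma directly from the higher-order Petty isoperimetric inequality, Theorem~\ref{t:petty_classic}, by exploiting the affine invariance recorded in Proposition~\ref{p:affine_invar_Zhang}. The key observation is that $\vol_n(K)^{n\hid-\hid}\vol_{n\hid}(\PP K)$, and hence the whole left-hand side of the claimed inequality (the denominator depending only on $n$ and $m$), is unchanged when $K$ is replaced by $TK$ for any $T\in GL_n(\R)$ or by a translate. One should therefore move $K$ into the position realizing the minimum in \eqref{eq:min_iso} and apply Theorem~\ref{t:petty_classic} there, where the surface-area term can be traded for volume via the definition of $\partial_K$.

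Concretely, first fix $T\in GL_n(\R)$ attaining the minimum in \eqref{eq:min_iso}, which exists as noted after \eqref{eq:min_iso}; then $\vol_{n-1}(\partial(TK)) = \partial_K\,\vol_n(TK)^{(n-1)/n}$, and $\partial_K>0$ by \eqref{eq:iso}. Next, apply Theorem~\ref{t:petty_classic} to the convex body $TK$:
\[\vol_{n\hid}(\PP(TK))\,\vol_{n-1}(\partial(TK))^{n\hid}\ \geq\ \omega_{nm}\E[W_n(C_{\bar\Theta})^{-nm}].\]
Substituting the identity for $\vol_{n-1}(\partial(TK))$ and using $n\hid(n-1)/n = n\hid-\hid$, this becomes
\[\partial_K^{n\hid}\,\vol_n(TK)^{n\hid-\hid}\,\vol_{n\hid}(\PP(TK))\ \geq\ \omega_{nm}\E[W_n(C_{\bar\Theta})^{-nm}].\]
Finally, invoke Proposition~\ref{p:affine_invar_Zhang} to replace $\vol_n(TK)^{n\hid-\hid}\vol_{n\hid}(\PP(TK))$ by $\vol_n(K)^{n\hid-\hid}\vol_{n\hid}(\PP K)$, and divide both sides by $\partial_K^{n\hid}\,\omega_{nm}\E[W_n(C_{\bar\Theta})^{-nm}]>0$; this is exactly the asserted inequality.

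For the equality statement I would simply track equality through the single inequality used: it holds if and only if $TK$ produces equality in Theorem~\ref{t:petty_classic}, which by the equality conditions of that theorem means $\pp(TK)$ (for $m=1$) or $TK$ (for $m\geq 2$) is a Euclidean ball. Reading this back through the affine map, and using that the minimizing position in \eqref{eq:min_iso} is unique up to $\mathcal{O}(n)$, one recovers the stated characterization: $K$ in minimal surface area position together with $\pp K$ (resp.\ $K$) a Euclidean ball; when $K$ is already in that position the argument above applies verbatim with $T=\operatorname{Id}_n$, and that choice of $T$ then forces equality.

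I do not expect a serious obstacle here: the content is entirely carried by Theorem~\ref{t:petty_classic} and Proposition~\ref{p:affine_invar_Zhang}, and what remains is bookkeeping. The two points deserving a moment of care are (i) the exponent computation — checking that raising $\partial_K\,\vol_n(TK)^{(n-1)/n}$ to the power $n\hid$ yields precisely the exponent $n\hid-\hid$ occurring in the affine-invariant functional — and (ii) phrasing the equality case so that the role of the minimizing matrix $T$, and the reduction to $T=\operatorname{Id}_n$ in minimal surface area position, are handled correctly.
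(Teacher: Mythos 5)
Your proposal is correct and follows essentially the same route as the paper: apply Theorem~\ref{t:petty_classic} to the affine image of $K$ in minimal surface area position, use the identity $\vol_{n-1}(\partial(TK))=\partial_K\vol_n(TK)^{(n-1)/n}$ to trade the surface-area term for $\partial_K$, and pull back via the affine invariance of Proposition~\ref{p:affine_invar_Zhang}. The paper's proof is a terser version of exactly this computation, and it treats the equality case with the same brief appeal to the equality conditions of Theorem~\ref{t:petty_classic} that you make.
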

\begin{proof}
    Begin by rewriting the inequality in Theorem~\ref{t:petty_classic} as
    $$\frac{\vol_n(K)^{n\hid-\hid}\vol_{n\hid}(\PP K)}{ \omega_{nm}\E[W_n(C_{\bar\Theta})^{-nm}]} \geq \left(\frac{\vol_n(K)^\frac{n-1}{n}}{\vol_{n-1}(\partial K)}\right)^{nm}.$$
    In the above inequality, replace $K$ with its affine image in minimal surface area position and conclude with Proposition~\ref{p:affine_invar_Zhang}. The equality conditions are inherited from Theorem~\ref{t:petty_classic}.
\end{proof}
The last fact is the following lemma, essentially proven in \cite{AG14}.
\begin{lemma}
    Let $K\subset\R^n$ be a convex body. Then, 
    $\partial_K \leq n\omega_n^\frac{1}{n}\mathrm{v.r}(K).$
\end{lemma}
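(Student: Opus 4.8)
The claim is that $\partial_K \leq n\omega_n^{1/n}\,\mathrm{v.r}(K)$, where $\partial_K$ is the minimal isoperimetric ratio of $K$ over $GL_n(\R)$. The plan is to exploit the fact that $\partial_K$ is a minimum over \emph{all} of $GL_n(\R)$, so it suffices to produce \emph{one} matrix $T$ for which the isoperimetric ratio of $TK$ is at most $n\omega_n^{1/n}\,\mathrm{v.r}(K)$. The natural candidate is the matrix $A$ that brings $K$ into John position, i.e. so that the maximal-volume ellipsoid of $AK$ is $\B$.

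So the first step is: let $A\in GL_n(\R)$ be such that $AK$ is in John position; by John's theorem this exists. By definition of $\partial_K$ as a minimum,
\[
\partial_K \leq \frac{\vol_{n-1}(\partial(AK))}{\vol_n(AK)^{\frac{n-1}{n}}}.
\]
The second step is to bound the numerator. Since $AK$ is in John position, $AK\supseteq\B$, and hence for every $\e>0$ we have $AK+\e\B\subseteq AK+\e(AK)=(1+\e)AK$, so by \eqref{eq:surface_area},
\[
\vol_{n-1}(\partial(AK))=\lim_{\e\to0}\frac{\vol_n(AK+\e\B)-\vol_n(AK)}{\e}\leq \lim_{\e\to0}\frac{\bigl((1+\e)^n-1\bigr)\vol_n(AK)}{\e}=n\vol_n(AK).
\]
The third step is to combine this with \eqref{eq:john_volume}. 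Plugging the bound on the surface area into the ratio above gives
\[
\partial_K\leq \frac{n\vol_n(AK)}{\vol_n(AK)^{\frac{n-1}{n}}}=n\,\vol_n(AK)^{\frac1n}=n\Bigl(\vol_n(AK)/\omega_n\Bigr)^{\frac1n}\omega_n^{\frac1n}=n\omega_n^{\frac1n}\,\mathrm{v.r}(K),
\]
where the last equality is exactly \eqref{eq:john_volume}. This completes the argument.

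I do not expect any serious obstacle here; the only points requiring a little care are the interchange of the limit in \eqref{eq:surface_area} with the inequality $AK+\e\B\subseteq(1+\e)AK$ (which is immediate since volume is monotone under inclusion and the difference quotients converge), and the observation that $\mathrm{v.r}(\cdot)$ is affine invariant so that using $AK$ in place of $K$ in \eqref{eq:john_volume} is legitimate—this is precisely how \eqref{eq:john_volume} was stated. The inclusion $\B\subseteq AK$, which drives the surface-area estimate, is the heart of why John position is the right normalization to choose.
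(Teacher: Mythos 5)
Your proof is correct and follows essentially the same route as the paper: pass to the John position via some $A\in GL_n(\R)$, bound $\vol_{n-1}(\partial(AK))\leq n\vol_n(AK)$, and conclude with \eqref{eq:john_volume}. The only difference is cosmetic: the paper cites K.\ Ball for the surface-area bound, while you derive it on the spot from the inclusion $\B\subseteq AK$ (hence $AK+\e\B\subseteq(1+\e)AK$), which is a perfectly valid elementary justification of the same estimate.
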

\begin{proof}
Let $A$ be any of the affine transformations that place $K$ in John position. It was shown by K. Ball \cite{Ball91} that $\vol_{n-1}(\partial (A K))\leq n \vol_n(A K)$.
    Therefore, from the fact that $\partial_K$ defined in \eqref{eq:min_iso} is a minimum, we have from the translation invariance of the Lebesgue measure
    $$\partial_K \leq \frac{\vol_{n-1}(\partial (A K))}{\vol_n(A K)^\frac{n-1}{n}} \leq n\vol_n(A K)^\frac{1}{n}.$$
    We conclude using \eqref{eq:john_volume}.
\end{proof}

Before concluding our discussion of the $m$th-order polar projection bodies, we mention that there are other generalizations of the polar projection body and Petty's projection inequality. It would take too much space to mention them all in detail, so we simply mention the $L^p$ \cite{LYZ00} and Orlicz \cite{LYZ10} cases by Lutwak, Yang, and Zhang and the asymmetric $L^p$ case by Haberl and Schuster \cite{HS09}. We, working again with Haddad, Putterman, Roysdon, and Ye, later considered an $L^p$ version of the $m$th-order polar projection bodies in \cite{HLPRY25_2}. Very recently, Ye, Zhou, and Zhang \cite{YZZ25} considered Orlicz versions. A main motivation for establishing generalizations of the Petty projection inequality is that it implies sharp affine Sobolev-type inequalities \cite{GZ99,LYZ02,HS2009,TW12}, {P}\'olya-{S}zeg\"{o} principles \cite{CLYZ09,HSX12,TW13,LRZ24}, and moment-entropy inequalities \cite{LLYZ13,NVH19,LYZ07,LYZ04_3,LYZ05_2,DL25}. 

\section{Mixed Projection Bodies}
The mixed volumes can be generalized further. To discuss this generalization, we must describe the mixed volumes from a different perspective. Let $K_1, K_2, \dots, K_r \subset \R^n$ be compact, convex sets in $\R^n$ and $\lambda_1, \dots, \lambda_r \ge 0$. Then, it turns out that the volume of the Minkowski summation
$\lambda_1 K_1 +\lambda_2 K_2+\dots + \lambda_r K_r$ is a homogeneous polynomial in the variables $\lambda_1,\dots, \lambda_r$ of degree $n$:
$$
\vol_n(\lambda_1 K_1 +\lambda_2 K_2+\dots + \lambda_r K_r)=\sum\limits^r_{i_1, i_2, \dots, i_n =1} V_n(K_{i_1}, \dots, K_{i_n}) \lambda_{i_1}\lambda_{i_2}\dots \lambda_{i_n}.
$$
The coefficients $V_n(K_{i_1}, \dots, K_{i_n})$ are precisely the mixed volume of $K_{i_1}, \dots, K_{i_n}.$ The mixed volumes are invariant under translation and permutation of their entries. We use the notation $V_n(K_1, \dots, K_j, K\dots, K)=V_n(K_1, \dots, K_j, K[n-j]).$
To determine a formula for the mixed volumes, we note that the surface area measure of a Minkowski sum can also be expanded as a polynomial (in the same way). In general, the mixed volume of a collection $K_1,\dots,K_{n-1},L$ of compact, convex sets, all in $\R^n$, has the integral formula
\[
V_n(K_1,\dots,K_{n-1},L)=\frac{1}{n}\int_{\s}h_L(u)d\sigma_{K_1,\dots,K_{n-1}}(u),
\]
with $\sigma_{K_1,\dots,K_{n-1}}$ being the mixed area measure of $K_1,\dots,K_{n-1}$ \cite[Theorem 5.1.7, eq. 5.18, pg. 280]{Sh1}. 
 With this definition, Lutwak \cite{LE85} introduced \textit{mixed polar projection bodies}: letting $K_1,\dots,K_{n-1}\subset \R^n$ be convex bodies, their mixed polar projection body $\pp (K_1,\dots,K_{n-1})$ is given by its gauge, for $\theta\in \s,$
\begin{align*}
\|\theta\|_{\pp (K_1,\dots,K_{n-1})}=nV_n\left(K_1,\dots,K_{n-1},\left[-\frac{\theta}{2},\frac{\theta}{2}\right]\right)
=\frac{1}{2}\int_{\s}|\langle \theta,u \rangle|d\sigma_{K_1,\dots,K_{n-1}}(u).
\end{align*}
These were extended to the $L^p$ setting by Wang and Leng \cite{LW07}. In this section, we extend the definition of $\pp (K_1,\dots,K_{n-1})$ and Lutwak's results for them to the $m$th-order setting.
\begin{definition}
\label{def:mixed}
    Fix $n\geq 2, m\geq 1$. Let $K_1,\dots,K_{n-1}\subset\R^n$ be convex bodies. Then, we define their $m$th-order mixed polar projection body as the body $\PP(K_1,\dots,K_{n-1})\subset \R^{nm}$ given by the gauge, for $\bar\theta=(\theta_1,\dots,\theta_m)\in \S$,
    \[
    \|\bar\theta\|_{\PP(K_1,\dots,K_{n-1})}=nV_n(K_1,\dots,K_{n-1},C_{-\bar\theta}).
    \]
\end{definition}
Our result for this section is the following theorem, extending \cite[Theorem 3.8]{LE85}.
\begin{theorem}
\label{t:mixed}
    Fix $n\geq 2,m\geq 1$. Let $K_1,\dots,K_{n-1}\subset \R^{n}$ be convex bodies. Then,
    \begin{align*}
    \vol_{nm}(\PP(K_1,\dots,K_{n-1}))^{n-1}\leq \prod_{i=1}^{n-1}\vol_{nm}(\PP K_i).
    \end{align*}
    There is equality if and only if the $K_i$ are homothetic to each other.
\end{theorem}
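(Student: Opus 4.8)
The plan is to follow Lutwak's proof of the first-order mixed projection inequality \cite{LE85}: convert the Aleksandrov--Fenchel inequality into a pointwise comparison of the gauges $\|\cdot\|_{\PP(K_1,\dots,K_{n-1})}$ and $\|\cdot\|_{\PP K_i}$ on $\S$, and then integrate this against the polar-coordinate formula \eqref{eq:polar_2}, now in $\R^{nm}$ rather than $\R^n$, and apply H\"older's inequality. For the first step I would fix $\bar\theta\in\S$ and apply the Aleksandrov--Fenchel inequality \cite{Sh1} to the compact convex sets $K_1,\dots,K_{n-1},C_{-\bar\theta}$; iterating it in the standard way yields $V_n(K_1,\dots,K_{n-1},C_{-\bar\theta})^{n-1}\ge\prod_{i=1}^{n-1}V_n(K_i[n-1],C_{-\bar\theta})$, which after multiplying by $n^{n-1}$ and unwinding Definitions~\ref{def:higher_proj} and \ref{def:mixed} becomes the ``mixed Aleksandrov--Fenchel inequality for $m$th-order projection bodies''
\[
\|\bar\theta\|_{\PP(K_1,\dots,K_{n-1})}^{\,n-1}\ \ge\ \prod_{i=1}^{n-1}\|\bar\theta\|_{\PP K_i},\qquad \bar\theta\in\S.
\]

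Write $N=nm$. Raising this to the power $-N/(n-1)$ and integrating over $\S$ via the $nm$-dimensional instance of \eqref{eq:polar_2} gives $N\,\vol_{nm}(\PP(K_1,\dots,K_{n-1}))\le\int_{\S}\prod_{i=1}^{n-1}\|\bar\theta\|_{\PP K_i}^{-N/(n-1)}\,d\bar\theta$. H\"older's inequality with the $n-1$ conjugate exponents all taken equal to $n-1$ bounds the right-hand side by $\prod_{i=1}^{n-1}\bigl(\int_{\S}\|\bar\theta\|_{\PP K_i}^{-N}\,d\bar\theta\bigr)^{1/(n-1)}$, and since $\int_{\S}\|\bar\theta\|_{\PP K_i}^{-N}\,d\bar\theta=N\,\vol_{nm}(\PP K_i)$ this equals $N\prod_{i=1}^{n-1}\vol_{nm}(\PP K_i)^{1/(n-1)}$. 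Cancelling $N$ and raising to the $(n-1)$st power yields the asserted inequality.

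For equality, the ``if'' direction is immediate: if the $K_i$ are homothetic, then by translation invariance of $\PP$ and homogeneity of mixed volumes the bodies $\PP K_1,\dots,\PP K_{n-1}$ and $\PP(K_1,\dots,K_{n-1})$ are all dilates of a single body, so every step above is an equality. Conversely, suppose equality holds, so the whole chain collapses. Tightness of H\"older's inequality forces the strictly positive continuous functions $\bar\theta\mapsto\|\bar\theta\|_{\PP K_i}^{-N}$ to be pairwise proportional; hence each $\PP K_i$ is a dilate of $\PP K_1$ and, by homogeneity of mixed volumes, equals $\PP(c_i K_1)$ for some $c_i>0$. When $m\ge 2$, \cite[Proposition 3.5]{HLPRY25} upgrades this to $K_i=c_iK_1+b_i$ with $b_i\in\R^n$, so the $K_i$ are homothetic. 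When $m=1$ the statement, with this equality characterization, is precisely Lutwak's \cite[Theorem 3.8]{LE85}; one can also argue directly, since collapse of the chain additionally forces the pointwise inequality above to be an equality for every $\bar\theta$, and specializing to $\bar\theta=\theta\in\s$ (where $C_{-\theta}=[o,-\theta]$) turns it, via the identity expressing a mixed volume with a segment as a mixed volume of projections, into equality in $V_{n-1}(P_{\theta^\perp}K_1,\dots,P_{\theta^\perp}K_{n-1})^{n-1}\ge\prod_{i}\vol_{n-1}(P_{\theta^\perp}K_i)$, whose equality case \cite{Sh1} forces the projections $P_{\theta^\perp}K_i$ to be pairwise homothetic for every $\theta$; a support-function argument then recovers homothety of the $K_i$ themselves when $n\ge 3$ (the case $n=2$ being vacuous, as then $n-1=1$ and $\PP(K_1)=\PP K_1$).

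I expect the only genuinely delicate point to be this equality analysis when $m=1$: knowing merely that the projection bodies $\PP K_i$ are mutually proportional is strictly weaker than homothety of the $K_i$, since $K\mapsto\pp K$ is not injective on general convex bodies, so one must additionally use the rigidity coming from tightness of the pointwise Aleksandrov--Fenchel inequality (or quote the resolved first-order case). The inequality itself, and the equality characterization when $m\ge2$, then follow routinely from the Aleksandrov--Fenchel inequality, the polar volume formula, H\"older's inequality, and \cite[Proposition 3.5]{HLPRY25}.
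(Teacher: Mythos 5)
Your proof is correct in its main line, but it takes a genuinely different route from the paper, and the comparison is instructive. You push the entire Aleksandrov--Fenchel iteration down to the level of mixed volumes, obtaining the single pointwise inequality $\|\bar\theta\|_{\PP(K_1,\dots,K_{n-1})}^{n-1}\geq\prod_i\|\bar\theta\|_{\PP K_i}$ on $\S$, and then apply \eqref{eq:polar_2} and one generalized H\"older; the paper instead iterates at the level of volumes, proving a reverse Minkowski-type lemma (two bodies, Lemma~\ref{l:mixed_1}) and a reverse Aleksandrov--Fenchel-type lemma (Lemma~\ref{l:mixed_2}), and then running the log-convexity induction of Lemmas~\ref{l:induction_1}--\ref{l:induction_2}. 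For the inequality itself your version is shorter and equivalent. The real divergence is in the equality analysis. The paper deliberately routes the last step of the iteration through Lemma~\ref{l:mixed_1}, whose equality case reduces (by restricting to $\bar\theta=(\theta,o,\dots,o)$) to Minkowski's \emph{first} inequality for the projections $P_{\theta^\perp}K_i$ --- a two-body inequality with known extremals --- followed by the classical ``all projections homothetic implies homothetic'' theorem; this yields the full characterization uniformly in $m$. Your argument bifurcates on $m$: for $m\geq 2$, tightness of H\"older plus \cite[Proposition 3.5]{HLPRY25} gives $K_i=c_iK_1+b_i$ directly, which is clean, correct, and arguably slicker than the paper's route (it does not even use tightness of the pointwise step); for $m=1$ you fall back on quoting Lutwak's Theorem 3.8, which is legitimate.

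The one genuine gap is in your proposed \emph{direct} alternative for the $m=1$ equality case. There you reduce to equality in $V_{n-1}(P_{\theta^\perp}K_1,\dots,P_{\theta^\perp}K_{n-1})^{n-1}\geq\prod_i\vol_{n-1}(P_{\theta^\perp}K_i)$ and assert that its equality case forces the projections to be pairwise homothetic. That multi-body consequence of Aleksandrov--Fenchel does not have known equality conditions once three or more distinct full-dimensional bodies are involved (i.e.\ for $n\geq 4$ here); this is precisely the obstruction the paper flags after Lemma~\ref{l:mixed_2} and precisely why it arranges for the final step of the iteration to be the two-body Lemma~\ref{l:mixed_1} rather than a generic Aleksandrov--Fenchel equality. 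So that alternative does not close; either quote Lutwak for $m=1$, or adopt the paper's routing through the two-body case.
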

Combining Theorem~\ref{t:mixed} with Theorem~\ref{t:pettyprojectioninequality}, we obtain the following corollary.
\begin{corollary}
    Fix $n\geq 2, m\geq 1$. Let $K_1,\dots,K_{n-1}\subset\R^n$ be convex bodies. Then,
    \begin{align*}
    \left(\prod_{i=1}^{n-1}\vol_n(K_i)\right)^m\vol_{nm}(\PP(K_1,\dots,K_{n-1}))
    \leq \frac{\omega_{nm}}{n^{nm}\omega_n^{\hid}}\E[W_n(C_{\bar\Theta})^{-nm}],
    \end{align*}
    with equality if and only if the $K_i$ are homothetic ellipsoids.
\end{corollary}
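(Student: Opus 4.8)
The plan is to chain the two main results established above, with no new ingredients. First I would invoke Theorem~\ref{t:mixed}, which already gives
\[
\vol_{n\hid}(\PP(K_1,\dots,K_{n-1}))^{n-1}\leq \prod_{i=1}^{n-1}\vol_{n\hid}(\PP K_i).
\]
Then, for each $i$, I would feed $K_i$ into the upper (Petty-type) bound of Theorem~\ref{t:pettyprojectioninequality}; since $n\hid-\hid=\hid(n-1)$, this reads
\[
\vol_{n\hid}(\PP K_i)\leq \frac{\omega_{n\hid}}{n^{n\hid}\omega_n^{\hid}}\,\E[W_n(C_{\bar\Theta})^{-n\hid}]\cdot\vol_n(K_i)^{-\hid(n-1)}.
\]
Taking the product of these $n-1$ estimates, substituting into the bound from Theorem~\ref{t:mixed}, extracting the $(n-1)$th root (legitimate since each body in sight is a genuine $n\hid$-dimensional convex body, hence of positive volume), and finally multiplying through by $\bigl(\prod_{i=1}^{n-1}\vol_n(K_i)\bigr)^{\hid}$ produces exactly the asserted inequality.

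For the equality characterization I would argue that equality in the corollary forces equality at every link of the chain: equality in Theorem~\ref{t:mixed}, and, for each $i$, equality in the Petty-type bound of Theorem~\ref{t:pettyprojectioninequality} applied to $K_i$ (a product of nonnegative terms, each dominated by the corresponding term, is equal to the product of the dominators only when every factor is). By those two theorems the first condition means $K_1,\dots,K_{n-1}$ are pairwise homothetic, and the second means each $K_i$ is an ellipsoid; together these say the $K_i$ are homothetic ellipsoids. Conversely, if the $K_i$ are homothetic ellipsoids, then each $K_i$ is an ellipsoid and they are pairwise homothetic, so both theorems hold with equality and hence so does the corollary.

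The whole argument is bookkeeping, so there is really no obstacle. The only points to watch are (i) the exponent arithmetic --- confirming that the powers of $\vol_n(K_i)$, of $n$, and of $\omega_n$ reassemble into the stated constant after the $(n-1)$th root, which hinges on the identity $n\hid-\hid=\hid(n-1)$ --- and (ii) checking that the two equality conditions are mutually consistent, so that their conjunction is precisely ``homothetic ellipsoids'' (a homothet of an ellipsoid is again an ellipsoid) rather than something vacuous or strictly stronger. Nothing beyond Theorems~\ref{t:mixed} and~\ref{t:pettyprojectioninequality} is required.
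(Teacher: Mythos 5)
Your proposal is correct and is exactly the paper's (implicit) argument: the paper derives the corollary by combining Theorem~\ref{t:mixed} with the Petty-type upper bound of Theorem~\ref{t:pettyprojectioninequality}, with the exponent bookkeeping and the equality analysis working out just as you describe.
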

To prove Theorem~\ref{t:mixed}, we need two inequalities. The first is the Aleksandrov-Fenchel inequality, which asserts that, if $K_1,K_2\dots,K_n$ are compact, convex sets, and we set $\mathfrak{K}=(K_3,\dots,K_n)$, then
\begin{equation}
    V_n(K_1,K_2,\mathfrak{K})^2\geq V_n(K_1,K_1,\mathfrak{K})V_n(K_2,K_2,\mathfrak{K}).
    \label{eq:AF}
\end{equation}
We also need the following consequence of Aleksandrov-Fenchel (see \cite[Section 7.4, eq. 7.65, pg. 401]{Sh1}): let $K_1$ and $K_2$ be two compact, convex sets and let $\mathfrak{K}=(K_{j+2},\dots,K_n)$ be a collection of $(n-1-j)$ compact, convex sets. Then, for $j=1,\dots,n-2,$
\begin{equation}
    V_n(K_1[j],K_2,\mathfrak{K})^{j+1}\geq V_n(K_1[j+1],\mathfrak{K})^{j}V_n(K_2[j+1],\mathfrak{K}).
    \label{eq:AF_2}
\end{equation}
With these inequalities in mind, Theorem~\ref{t:mixed} follows as an immediate application of the following two lemmas. We introduce the notation
\begin{equation}
\label{eq:concise_notation}
\PP(K_1,K_2)=\PP(K_1,K_1\dots,K_1,K_{2}).\end{equation}
\begin{lemma}
\label{l:mixed_1}
    Fix $n\geq 2, m\geq 1$. Let $K_1,K_2\subset\R^n$ be convex bodies. Then, 
    \[
    \vol_{nm}(\PP(K_1,K_2))^{n-1} \leq \vol_{nm}(\PP K_1)^{n-2} \vol_{nm}(\PP K_2) .
    \]
    There is equality if and only if $K_1$ and $K_2$ are homothetic.
\end{lemma}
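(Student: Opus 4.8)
\textbf{Proof proposal for Lemma~\ref{l:mixed_1}.}

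The plan is to pass to polar coordinates on $\S$, write the volume of each $m$th-order mixed polar projection body as an integral of a power of its gauge, and then apply a Hölder-type inequality after controlling the integrand pointwise via the Aleksandrov--Fenchel consequence \eqref{eq:AF_2}. Concretely, by \eqref{eq:polar_2} applied in $\R^{nm}$, we have
\[
\vol_{nm}(\PP(K_1,K_2)) = \frac{1}{nm}\int_{\S}\|\bar\theta\|_{\PP(K_1,K_2)}^{-nm}\,d\bar\theta,
\]
and similarly for $\PP K_1$ and $\PP K_2$. Using Definition~\ref{def:mixed} and the notation \eqref{eq:concise_notation}, the integrand is
\[
\|\bar\theta\|_{\PP(K_1,K_2)}^{-nm} = \big(nV_n(K_1[n-2],K_2,C_{-\bar\theta})\big)^{-nm}.
\]
Now apply \eqref{eq:AF_2} with $j=n-2$, $\mathfrak{K}=(C_{-\bar\theta})$, the body $K_1$ in the role of $K_1$, and $K_2$ in the role of $K_2$: this gives
\[
V_n(K_1[n-2],K_2,C_{-\bar\theta})^{n-1}\geq V_n(K_1[n-1],C_{-\bar\theta})^{n-2}V_n(K_2[n-1],C_{-\bar\theta}).
\]
Translating back through Definition~\ref{def:higher_proj}, this is precisely
\[
\|\bar\theta\|_{\PP(K_1,K_2)}^{n-1}\geq \|\bar\theta\|_{\PP K_1}^{n-2}\,\|\bar\theta\|_{\PP K_2}
\]
for every $\bar\theta\in\S$, hence the pointwise bound
\[
\|\bar\theta\|_{\PP(K_1,K_2)}^{-nm}\leq \|\bar\theta\|_{\PP K_1}^{-nm\frac{n-2}{n-1}}\,\|\bar\theta\|_{\PP K_2}^{-nm\frac{1}{n-1}}.
\]

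Next I would integrate this inequality over $\S$ and apply Hölder's inequality with exponents $p=\frac{n-1}{n-2}$ and $q=n-1$ (so $1/p+1/q=1$), which yields
\[
\int_{\S}\|\bar\theta\|_{\PP(K_1,K_2)}^{-nm}\,d\bar\theta
\leq \left(\int_{\S}\|\bar\theta\|_{\PP K_1}^{-nm}\,d\bar\theta\right)^{\frac{n-2}{n-1}}
\left(\int_{\S}\|\bar\theta\|_{\PP K_2}^{-nm}\,d\bar\theta\right)^{\frac{1}{n-1}}.
\]
Multiplying through by $(nm)^{-1}$ and recognizing each integral as $nm$ times the appropriate volume gives
\[
\vol_{nm}(\PP(K_1,K_2))\leq \vol_{nm}(\PP K_1)^{\frac{n-2}{n-1}}\vol_{nm}(\PP K_2)^{\frac{1}{n-1}},
\]
which is the claimed inequality after raising both sides to the power $n-1$. (When $n=2$ the statement is trivial, since then $\PP(K_1,K_2)=\PP K_2$ and the exponent $n-2$ vanishes; so we may assume $n\geq 3$ for the Hölder step.)

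The remaining work, and the main obstacle, is the equality characterization. Equality in the final chain forces equality in \emph{both} the pointwise Aleksandrov--Fenchel step for almost every $\bar\theta$ and in Hölder's inequality. The Hölder equality condition says $\|\bar\theta\|_{\PP K_1}^{-nm}$ and $\|\bar\theta\|_{\PP K_2}^{-nm}$ are proportional as functions on $\S$, i.e. $\PP K_1$ and $\PP K_2$ are dilates of one another. To upgrade this to homothety of $K_1$ and $K_2$ themselves, I would invoke the injectivity properties of the $m$th-order projection body operator cited in the equality discussion of Theorem~\ref{t:petty_classic}: for $m\geq 2$, \cite[Proposition 3.5]{HLPRY25} gives that $\PP K_1$ being a dilate of $\PP K_2$ forces $K_1$ to be a dilate-plus-translate of $K_2$ (one passes to a dilate of $K_2$ to kill the scaling, then applies the stated translation-uniqueness); for $m=1$ one uses instead the classical equality condition in Lutwak's inequality \cite[Theorem 3.8]{LE85} together with $\Pi^{\circ,1}=\pp$. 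Conversely, if $K_1$ and $K_2$ are homothetic then all the mixed volumes involved collapse to scalar multiples of one another and equality is immediate. The one point requiring care is checking that equality in the \emph{pointwise} step \eqref{eq:AF_2} (which for $j=n-2$ is itself a consequence of Aleksandrov--Fenchel applied iteratively) is compatible with — indeed implied by — homothety of $K_1,K_2$, so that the two equality conditions do not conflict; this follows since homothetic bodies satisfy all mixed-volume inequalities with equality.
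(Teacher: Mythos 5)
Your proof of the inequality is the same as the paper's: the pointwise bound \eqref{eq:AF_2.5} obtained from \eqref{eq:AF_2} with $j=n-2$ and $\mathfrak{K}=\{C_{-\bar\theta}\}$, followed by \eqref{eq:polar_2} and H\"older with exponents $\frac{n-1}{n-2}$ and $n-1$. Where you genuinely diverge is the equality characterization. You extract the H\"older equality condition (proportional gauges, so $\PP K_1$ is a dilate of $\PP K_2$) and then invoke injectivity of $\PP$ via \cite[Proposition 3.5]{HLPRY25}; this is a valid and arguably cleaner route for $m\geq 2$, since by the degree-$(n-1)$ homogeneity of $K\mapsto\|\cdot\|_{\PP K}$ one gets $\PP K_1=\PP(\lambda K_2)$ and hence $K_1=\lambda K_2+b$. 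But as you implicitly concede, this mechanism breaks at $m=1$: the classical operator $\pp$ is \emph{not} injective up to translation on non-symmetric bodies (it sees only the even part of the surface area measure), so ``$\pp K_1$ a dilate of $\pp K_2$'' does not force homothety, and your fallback is to cite Lutwak's equality condition as a black box. The paper instead uses the \emph{other} necessary condition --- equality in the pointwise Aleksandrov--Fenchel step \eqref{eq:AF_2.5} for all $\bar\theta$ --- specialized to directions $\bar\theta=(\theta,o,\dots,o)$, where it becomes equality in Minkowski's first inequality \eqref{eq:min_first} for the projections $P_{\theta^\perp}K_1,P_{\theta^\perp}K_2$ in dimension $n-1$; homothety of all hyperplane projections then yields homothety of the bodies. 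That argument is uniform in $m\geq 1$ and avoids both the injectivity issue and the appeal to the classical theorem, so you may want to adopt it at least for the $m=1$ case. (Minor shared caveat: for $n=2$ the inequality degenerates to an identity, so the ``only if'' direction of the equality claim is really a statement for $n\geq 3$.)
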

\begin{proof}
From Definition~\ref{def:mixed} and \eqref{eq:AF_2}, applied to the case when $j=n-2$ and $\mathfrak{K}=\{C_{-\bar\theta}\},$ we have, for every $\bar\theta\in\S$,
\begin{equation}
\label{eq:AF_2.5}
    \|\bar\theta\|^{n-1}_{\PP(K_1,K_2)}\geq \|\bar\theta\|^{n-2}_{\PP K_1}\|\bar\theta\|_{\PP K_2}.
\end{equation}
    Next, applying \eqref{eq:polar_2}, H\"older's inequality and \eqref{eq:AF_2.5}, we have
    \begin{equation}
    \begin{split}
        &(nm\vol_{nm}(\PP(K_1,K_2)))^{n-1} = \left(\int_{\S}\|\bar\theta\|_{\PP(K_1,K_2)}^{-nm}d\bar\theta\right)^{n-1}
        \\
        &\leq \left(\int_{\S}\left(\|\bar\theta\|_{\PP K_1}^{-nm} \right)^\frac{n-2}{n-1}\left(\|\bar\theta\|_{\PP K_2}^{-nm}\right)^\frac{1}{n-1}d\bar\theta\right)^{n-1}
        \\
        &\leq \left(\int_{\S} \|\bar\theta\|_{\PP K_1}^{-nm} d\bar\theta\right)^{n-2}\left(\int_{\S} \|\bar\theta\|_{\PP K_2}^{-nm} d\bar\theta\right)
        \\
        &=(nm)^{n-1}\vol_{nm}(\PP K_1)^{n-2}\vol_{nm}(\PP K_2).
    \end{split}
    \end{equation}
    Suppose $K_1$ and $K_2$ are such that equality holds. Then, we must have equality in the use of \eqref{eq:AF_2.5} for almost all $\bar\theta\in\S$. From the equality condition in H\"older's inequality, and the fact these functions are continuous, we have equality in \eqref{eq:AF_2.5} for all $\bar\theta\in\S$. By considering only vectors of the form $\bar\theta=(\theta,o,\dots,o)$, we deduce that, for all $\theta\in\s$,
    \begin{align*}
    V_{n-1}(P_{\theta^\perp }K_1[n-2],P_{\theta^\perp }K_2)^{n-1} &=V_n(K_1[n-2],K_1,[o,\theta])^{n-1}
    \\
    &=V_n(K_1[n-1],[o,\theta])^{n-2}V_n(K_2[n-1],[o,\theta])
    \\
    &=\vol_{n-1}(P_{\theta^\perp }K_1)^{n-2}\vol_{n-1}(P_{\theta^\perp }K_2).
    \end{align*} 
    The first and third equalities follow from \cite[Theorem 5.3.1]{Sh1}.
    This is equality in Minkowski's first inequality, \eqref{eq:min_first}, in dimension $(n-1)$. Thus, $P_{\theta^\perp }K_1$ is homothetic to $P_{\theta^\perp }K_2$ for all $\theta\in\s$. It is well-known that this means $K_1$ is homothetic to $K_2$ (see \cite[pg. 101 and the notes on pgs. 126-127]{gardner_book}).
\end{proof}

\begin{lemma}
\label{l:mixed_2}
    Fix $n\geq 2, m\geq 1$. Let, for $i=1,\dots,n-1$, $K_i\subset\R^n$ be convex bodies in $\R^n$ and set $\mathfrak{K}=(K_3,\dots,K_{n-1})$. Then, 
    \[
    \vol_{nm}(\PP(K_1,K_2,\mathfrak{K}))^2 \leq \vol_{nm}(\PP (K_1,K_1,\mathfrak{K})) \vol_{nm}(\PP (K_2,K_2,\mathfrak{K}) ).
    \]
    There is equality when the $K_i$ are homothetic to each other.
\end{lemma}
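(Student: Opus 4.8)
The plan is to mirror the proof of Lemma~\ref{l:mixed_1}, replacing the consequence \eqref{eq:AF_2} of the Aleksandrov--Fenchel inequality by the Aleksandrov--Fenchel inequality \eqref{eq:AF} itself, and replacing H\"older's inequality with exponents $\tfrac{n-2}{n-1},\tfrac{1}{n-1}$ by the Cauchy--Schwarz inequality.

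First I would fix $\bar\theta\in\S$ and apply \eqref{eq:AF} to the $n$-tuple of compact, convex sets consisting of $K_1$, $K_2$, the bodies in $\mathfrak{K}=(K_3,\dots,K_{n-1})$, and the polytope $C_{-\bar\theta}$; that is, in the notation of \eqref{eq:AF} one takes the fixed collection to be $(K_3,\dots,K_{n-1},C_{-\bar\theta})$. Since $\bar\theta$ is a unit vector at least one $\theta_i$ is nonzero, so $C_{-\bar\theta}$ is at least one-dimensional and, by monotonicity of mixed volumes together with the fact that each $K_i$ has nonempty interior, every mixed volume that appears is strictly positive. Multiplying the resulting inequality by $n^2$ and invoking Definition~\ref{def:mixed} then yields the pointwise bound
\[
\|\bar\theta\|_{\PP(K_1,K_2,\mathfrak{K})}^2 \;\geq\; \|\bar\theta\|_{\PP(K_1,K_1,\mathfrak{K})}\,\|\bar\theta\|_{\PP(K_2,K_2,\mathfrak{K})}, \qquad \bar\theta\in\S.
\]
Next I would raise $nm\,\vol_{nm}(\PP(K_1,K_2,\mathfrak{K}))$ to the second power, rewrite it via the polar coordinate formula \eqref{eq:polar_2} as $\big(\int_{\S}\|\bar\theta\|_{\PP(K_1,K_2,\mathfrak{K})}^{-nm}\,d\bar\theta\big)^2$, insert the pointwise bound above (the exponent $-nm<0$ reverses the inequality), split the integrand as the product of $\|\bar\theta\|_{\PP(K_1,K_1,\mathfrak{K})}^{-nm/2}$ and $\|\bar\theta\|_{\PP(K_2,K_2,\mathfrak{K})}^{-nm/2}$, and apply the Cauchy--Schwarz inequality. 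Undoing \eqref{eq:polar_2} on each of the two resulting integrals produces exactly $(nm)^2\vol_{nm}(\PP(K_1,K_1,\mathfrak{K}))\vol_{nm}(\PP(K_2,K_2,\mathfrak{K}))$, which is the claim after dividing by $(nm)^2$.

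Finally, for the equality assertion I only need sufficiency, which follows from a direct computation: if $K_i=\lambda_iK+t_i$ for a common convex body $K$, scalars $\lambda_i>0$ and translations $t_i\in\R^n$, then multilinearity and translation invariance of mixed volumes give $\|\bar\theta\|_{\PP(K_1,K_2,\mathfrak{K})}=\lambda_1\lambda_2\big(\prod_{i=3}^{n-1}\lambda_i\big)\|\bar\theta\|_{\PP K}$, and likewise for the two other bodies with $\lambda_1\lambda_2$ replaced by $\lambda_1^2$ and $\lambda_2^2$; hence all three $m$th-order mixed polar projection bodies are dilates of $\PP K$ whose dilation factors make the two sides of the asserted inequality coincide. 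I do not expect a genuine obstacle here, since the argument is a structural copy of the proof of Lemma~\ref{l:mixed_1}; the only points needing a line of care are that $C_{-\bar\theta}$ is admissible in \eqref{eq:AF} (being a polytope, it is compact and convex) and that the relevant gauges are strictly positive, so that raising them to the power $-nm$ is legitimate. What we deliberately avoid is the converse direction of the equality statement, which would force us to confront the delicate equality case of the Aleksandrov--Fenchel inequality; the lemma claims only that homothety suffices for equality, so this is not needed, and the full equality characterization of Theorem~\ref{t:mixed} is instead supplied by Lemma~\ref{l:mixed_1}.
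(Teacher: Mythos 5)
Your proof is correct and follows essentially the same route as the paper: the pointwise Aleksandrov--Fenchel bound on the gauges (the paper's version is just yours raised to the power $nm/2$), followed by the polar volume formula and Cauchy--Schwarz, which the paper phrases as H\"older's inequality with exponents $\tfrac12,\tfrac12$. The only difference is in verifying that homothety suffices for equality, where you use a direct multilinearity and scaling computation while the paper passes through equality of the corresponding Aleksandrov--Fenchel inequalities for the projections onto the hyperplanes $\theta^{\perp}$; both are valid, and yours is arguably more self-contained.
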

\begin{proof}
From Definition~\ref{def:mixed} and \eqref{eq:AF}, we have
\begin{equation}
\label{eq:AF_3}
    \|\bar\theta\|^{nm}_{\PP(K_1,K_2,\mathfrak{K})}\geq \|\bar\theta\|^{\frac{nm}{2}}_{\PP (K_1,K_1,\mathfrak{K})}\|\bar\theta\|_{\PP (K_2,K_2,\mathfrak{K})}^\frac{nm}{2}.
\end{equation}
    Next, applying \eqref{eq:polar_2}, H\"older's inequality, and \eqref{eq:AF_3}, we have
    \begin{equation}
    \begin{split}
        &(nm)\vol_{nm}(\PP(K_1,K_2,\mathfrak{K})) = \int_{\S}\|\bar\theta\|_{\PP(K_1,K_2,\mathfrak{K})}^{-nm}d\bar\theta
        \\
        &\leq \int_{\S}\left(\|\bar\theta\|_{\PP (K_1,K_1,\mathfrak{K})}^{-nm} \right)^\frac{1}{2}\left(\|\bar\theta\|_{\PP (K_2,K_2,\mathfrak{K})}^{-nm}\right)^\frac{1}{2}d\bar\theta
        \\
        &\leq \left(\int_{\S} \|\bar\theta\|_{\PP (K_1,K_1,\mathfrak{K})}^{-nm} d\bar\theta\right)^\frac{1}{2}\left(\int_{\S} \|\bar\theta\|_{\PP (K_2,K_2,\mathfrak{K})}^{-nm} d\bar\theta\right)^\frac{1}{2}
        \\
        &=(nm)\vol_{nm}(\PP (K_1,K_1,\mathfrak{K}))^{\frac{1}{2}}\vol_{nm}(\PP (K_2,K_2,\mathfrak{K}))^\frac{1}{2}.
    \end{split}
    \end{equation}
    Arguing like in the proof of Lemma~\ref{l:mixed_1}, equality occurs when, for all $\theta\in\s$,
    \[
    V_{n-1}(P_{\theta^\perp}K_1,P_{\theta^\perp}K_2,\mathfrak{K}_\theta)^2 = V_{n-1}(P_{\theta^\perp}K_1,P_{\theta^\perp}K_1,\mathfrak{K}_\theta)V_{n-1}(P_{\theta^\perp}K_2,P_{\theta^\perp}K_2,\mathfrak{K}_\theta),
    \]
    where $\mathfrak{K}_\theta=(P_{\theta^\perp}K_3,\dots,P_{\theta^\perp}K_{n-1})$. 
    
    This occurs, for example, when each $P_{\theta^\perp}K_i$ is homothetic. If this special case is true for all $\theta\in\s$, then we must have that each $K_i$ is homothetic.
\end{proof}
We are unable to obtain the fully equality characterization in Lemma~\ref{l:mixed_2} due to our use of the Aleksandrov-Fenchel inequality \eqref{eq:AF}, whose equality conditions are still open. Although the inequality was established over a century ago, progress towards equality characterization was made only recently; see \cite{SH23} and the references therein.

The proof of Theorem~\ref{t:mixed} is the same in spirit as the passage from \eqref{eq:AF} to \eqref{eq:AF_2}, via repeated applications of Lemma~\ref{l:mixed_2} with final applications of Lemma~\ref{l:mixed_1}. We include the details of this argument for completeness. 

To this end, we isolate the applications of Lemma~\ref{l:mixed_2} in the following two lemmas. To state them concisely, we introduce the notation, for $j=1,\dots,n-1$ and $K_1,K_2,\dots,K_j\subset \R^n$ convex bodies, $$\PP(K_{1}[i_1],\dots,K_j[i_j]),\quad \text{ where }\sum_{k=1}^ji_k=n-1,$$ to represent that, for $k=1,\dots,j$, the body $K_k$ is repeated $i_k$ times among the convex bodies appearing in Definition~\ref{def:mixed}. Furthermore, we say a sequence of positive numbers $(a_0,a_1,\dots,a_j)$ is \textit{log-concave} if $a_i^2\geq a_{i-1}a_{i+1}$ for $i=1,\dots,j-1$. It follows by iteration (see \cite[Section 7.4, eq. 7.61, pg. 400]{Sh1}) that 
\begin{equation}
\label{eq:log_concave_it}
a_\alpha^{k-i}\geq a_i^{k-\alpha}a_k^{\alpha-i} \qquad \text{for} \quad 0\leq i<\alpha<k\leq j.\end{equation} 
For fixed $i<k\leq j$, there is equality in \eqref{eq:log_concave_it} if and only if $a_\alpha^2=a_{\alpha-1}a_{\alpha+1}$ for $\alpha=i+1,\dots,k-1$; we say the sequence $(a_0,\dots,a_j)$ is \textit{log-affine} between $i$ and $k$.
Similarly, we say that a sequence of positive numbers $(c_0,c_1,\dots,c_j)$ is \textit{log-convex} if the sequence $a_i=c_i^{-1}$ is log-concave.

\begin{lemma}
\label{l:induction_1}
Let $n\geq 2,m\geq 1$. Then, for all convex bodies $K_1,\dots,K_{n-1}\subset \R^n$, and $j=1,\dots,n-1,$ one has, with $\mathfrak{K}=(K_{j+1},\dots,K_{n-1})$,
\begin{equation}
    \vol_{nm}\left(\PP\!(K_1[j-1],K_2,\mathfrak{K})\right)^{j}\!\!\leq\!\! \vol_{nm}\left(\PP\!(K_1[j],\mathfrak{K})\right)^{j-1}\!\vol_{nm}\left(\PP\!(K_2[j],\mathfrak{K})\right).
    \label{eq:RAF_2}
\end{equation}
There is equality when the $K_i$ are homothetic to each other.
\end{lemma}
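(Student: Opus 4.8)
The plan is to recognize \eqref{eq:RAF_2} as the $j$-fold iterate of Lemma~\ref{l:mixed_2}, obtained from it in precisely the manner that \eqref{eq:AF_2} is deduced from \eqref{eq:AF}. When $j=1$ the claimed inequality is a trivial identity: with $\mathfrak{K}=(K_2,\dots,K_{n-1})$ both sides equal $\vol_{nm}(\PP(K_2,\mathfrak{K}))$. So I would assume $j\geq 2$, fix $K_1,K_2$ and $\mathfrak{K}=(K_{j+1},\dots,K_{n-1})$ (a tuple of $n-1-j$ bodies), and introduce the finite sequence of positive numbers
\[
a_i:=\vol_{nm}\bigl(\PP(K_1[i],K_2[j-i],\mathfrak{K})\bigr),\qquad i=0,1,\dots,j,
\]
so that the left-hand side of \eqref{eq:RAF_2} is $a_{j-1}^{\,j}$ and the right-hand side is $a_j^{\,j-1}a_0$.

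The key step is to show that $(a_0,\dots,a_j)$ is log-convex, i.e. that the reciprocal sequence $(a_i^{-1})_{i=0}^{j}$ is log-concave in the sense defined before \eqref{eq:log_concave_it}. Fix $i$ with $1\leq i\leq j-1$ and apply Lemma~\ref{l:mixed_2} with its two distinguished bodies taken to be $K_1$ and $K_2$ and with the role of its ``$\mathfrak{K}$'' played by the $(n-3)$-tuple $(K_1[i-1],K_2[j-i-1],\mathfrak{K})$; note $i-1\geq 0$, $j-i-1\geq 0$, and $(i-1)+(j-i-1)+(n-1-j)=n-3$, so this is legitimate. Then the ``mixed'' body $\PP(K_1,K_2,\cdot)$ of that lemma is $\PP(K_1[i],K_2[j-i],\mathfrak{K})$, while $\PP(K_1,K_1,\cdot)$ and $\PP(K_2,K_2,\cdot)$ are $\PP(K_1[i+1],K_2[j-i-1],\mathfrak{K})$ and $\PP(K_1[i-1],K_2[j-i+1],\mathfrak{K})$ respectively, so Lemma~\ref{l:mixed_2} reads $a_i^2\leq a_{i-1}a_{i+1}$, as desired.

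To finish, I would invoke the iterated inequality \eqref{eq:log_concave_it} for the log-concave sequence $(a_i^{-1})_{i=0}^{j}$ with the index choice $0<j-1<j$, which gives $(a_{j-1}^{-1})^{\,j}\geq (a_0^{-1})^{\,1}(a_j^{-1})^{\,j-1}$, that is, $a_{j-1}^{\,j}\leq a_0\,a_j^{\,j-1}$; this is exactly \eqref{eq:RAF_2}. For the equality clause: if all the $K_i$ are homothetic to a common convex body $K$, then by multilinearity and translation invariance of mixed volumes one has $\|\bar\theta\|_{\PP(K_1[i],K_2[j-i],\mathfrak{K})}=c_i\,\|\bar\theta\|_{\PP K}$ for every $\bar\theta\in\S$, where $c_i>0$ is the product of the relevant dilation factors; hence $a_i=c_i^{-nm}\vol_{nm}(\PP K)$, and a direct check that $c_{j-1}^{\,j}=c_0\,c_j^{\,j-1}$ shows the two sides of \eqref{eq:RAF_2} coincide. (Equivalently, each application of Lemma~\ref{l:mixed_2} above is an equality in this case, so the chain of inequalities collapses.)

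I do not expect a genuine obstacle here; the argument is a clean iteration. The only points requiring care are the bookkeeping of repetitions in the definition of $a_i$ when quoting Lemma~\ref{l:mixed_2}, and the observation that---because $\vol_{nm}$ depends on the gauge through $\|\cdot\|^{-nm}$---the Aleksandrov--Fenchel direction produces a \emph{log-convex} volume sequence, so that \eqref{eq:log_concave_it} must be applied to the reciprocals. As in Lemma~\ref{l:mixed_2}, only the ``if'' direction of the equality statement is obtained, since the equality characterization of \eqref{eq:AF} is unavailable.
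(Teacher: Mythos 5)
Your proof is correct and takes essentially the same route as the paper's: both define the sequence of volumes $\vol_{nm}(\PP(K_1[\cdot],K_2[\cdot],\mathfrak{K}))$, deduce its log-convexity from Lemma~\ref{l:mixed_2}, and conclude via the iterated inequality \eqref{eq:log_concave_it} applied to the reciprocal (log-concave) sequence, with the same observation that homothety forces log-affinity and hence equality. The only differences are cosmetic (reversed indexing of the sequence and an explicit treatment of the trivial case $j=1$).
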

\begin{proof}
The proof of \eqref{eq:RAF_2} is the exact same as the proof of deriving \eqref{eq:AF_2} from \eqref{eq:AF}, but with all inequalities reversed. For completeness, we reproduce this argument from Schneider's textbook \cite[Section 7.4, eq. 7.65, pg. 401]{Sh1}. Fix $j\in \{1,2\dots,n-1\}$. For $i=0,\dots,j$ define the constants
\[
c_i = \vol_{nm}\left(\PP(K_1[j-i],K_2[i],\mathfrak{K})\right).
\]
By Lemma~\ref{l:mixed_2}, the sequence $(c_0,\dots,c_j)$ is log-convex. Hence, by applying the iterated formula \eqref{eq:log_concave_it}, we have $$c_\alpha^{k-i}\leq c_i^{k-\alpha}c_k^{\alpha-i}, \quad \text{for} \quad 0\leq i<\alpha<k\leq j.$$
We conclude the proof of the inequality by picking $i=0$, $\alpha=1$ and $k=j$. For the final claim, if the $K_i$ are homothetic, then the sequence $(c_0,\dots,c_j)$ is log-affine. We conclude.
\end{proof}
Notice that Lemma~\ref{l:induction_1} with $j=n-1$ is precisely Lemma~\ref{l:mixed_1}. That is, the reverse Aleksandrov-Fenchel-type inequality in Lemma~\ref{l:mixed_2} \textit{implies} by iteration the reverse Minkowski-type inequality in Lemma~\ref{l:mixed_1}. This is not surprising, and, indeed, is expected. However, what Lemma~\ref{l:mixed_1} provides us, that Lemma~\ref{l:induction_1} does not, is the sharp equality characterization.

\begin{lemma} 
\label{l:induction_2}
Fix $n\geq 2, m\geq 1$. For convex bodies $K_1,\dots,K_{n-1}\subset \R^n$ and any $j \in \{2, \dots, n-1\}$, we have, by setting $\mathfrak{K}=(K_{j+1},\dots,K_{n-1})$,
\begin{equation}
\label{eq:induction}
\vol_{nm}\left(\PP(K_1, \dots, K_j, \mathfrak{K})\right)^j \leq \prod_{i=1}^j \vol_{nm}\left(\PP(K_i[j], \mathfrak{K})\right).
\end{equation}
There is equality when the $K_i$ are homothetic to each other.
\end{lemma}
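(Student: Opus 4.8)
The plan is to prove \eqref{eq:induction} by induction on $j$, using Lemma~\ref{l:induction_1} as the engine that drives the inductive step. To lighten the notation I will write $P(\,\cdot\,)=\vol_{nm}(\PP(\,\cdot\,))$ for the remainder of the argument. The base case $j=2$ asserts exactly $P(K_1,K_2,\mathfrak{K})^{2}\le P(K_1[2],\mathfrak{K})\,P(K_2[2],\mathfrak{K})$ with $\mathfrak{K}=(K_3,\dots,K_{n-1})$, which is precisely Lemma~\ref{l:mixed_2} (equivalently, Lemma~\ref{l:induction_1} with $j=2$); so there is nothing to do. Note this already covers the case $n=3$, where $j=2$ is the only admissible value.

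For the inductive step, fix $j\in\{3,\dots,n-1\}$ and assume \eqref{eq:induction} holds with $j$ replaced by $j-1$, for every choice of convex bodies and every admissible tail. View $\mathfrak{K}':=(K_j,\mathfrak{K})$ as a single tail of $n-j=n-1-(j-1)$ convex bodies, so that $\PP(K_1,\dots,K_j,\mathfrak{K})=\PP(K_1,\dots,K_{j-1},\mathfrak{K}')$; applying the inductive hypothesis to the $j-1$ bodies $K_1,\dots,K_{j-1}$ with tail $\mathfrak{K}'$ gives
\[
P(K_1,\dots,K_j,\mathfrak{K})^{\,j-1}\;\le\;\prod_{i=1}^{j-1}P(K_i[j-1],K_j,\mathfrak{K}).
\]
Next, for each fixed $i\in\{1,\dots,j-1\}$, I apply Lemma~\ref{l:induction_1} with $K_i$ in the role of ``$K_1$'', $K_j$ in the role of ``$K_2$'', tail $\mathfrak{K}$ (which has $n-1-j$ entries, as required), and the parameter there set equal to $j$; this yields
\[
P(K_i[j-1],K_j,\mathfrak{K})^{\,j}\;\le\;P(K_i[j],\mathfrak{K})^{\,j-1}\,P(K_j[j],\mathfrak{K}).
\]
Raising the first display to the $j$-th power, substituting the second, and collecting the $j-1$ copies of $P(K_j[j],\mathfrak{K})$ together with the product over $i$, one arrives at
\[
P(K_1,\dots,K_j,\mathfrak{K})^{\,j(j-1)}\;\le\;\left(\prod_{i=1}^{j}P(K_i[j],\mathfrak{K})\right)^{j-1},
\]
and taking $(j-1)$-th roots gives \eqref{eq:induction}.

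For the equality assertion, observe that if the $K_i$ are pairwise homothetic then the inductive hypothesis holds with equality and every invocation of Lemma~\ref{l:induction_1} is an equality (its stated equality case), so \eqref{eq:induction} is an equality as well. I do not anticipate a genuine obstacle in this lemma: all of the analytic content --- the pointwise Aleksandrov--Fenchel inequality and the H\"older step --- is already packaged in Lemmas~\ref{l:mixed_1}, \ref{l:mixed_2}, and \ref{l:induction_1}, and what remains is purely combinatorial. The one point that needs care is the bookkeeping of tail lengths: that $\mathfrak{K}'$ has exactly $n-j$ entries when the inductive hypothesis is invoked, and that $\mathfrak{K}$ has exactly $n-1-j$ entries when Lemma~\ref{l:induction_1} is invoked with parameter $j$ (here the permutation invariance of $\PP(\cdots)$, which records only multiplicities of the arguments, is used freely to relabel). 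This is the same ``reverse'' iteration that promotes the Aleksandrov--Fenchel inequality \eqref{eq:AF} to its higher-codimension consequence \eqref{eq:AF_2}.
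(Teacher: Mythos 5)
Your proof is correct and takes essentially the same route as the paper's: the base case $j=2$ is Lemma~\ref{l:mixed_2}, and the inductive step applies the hypothesis at level $j-1$ with tail $(K_j,\mathfrak{K})$ and then Lemma~\ref{l:induction_1} to each resulting factor. The only cosmetic difference is that you raise to the $j$-th power before substituting, whereas the paper substitutes the fractional-exponent form of Lemma~\ref{l:induction_1} directly; the bookkeeping and the equality discussion match.
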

\begin{proof}
    The proof of \eqref{eq:induction} is the exact same as the proof of \cite[Section 7.4, eq. 7.64, pg. 401]{Sh1}, but with all inequalities reversed. For completeness, we reproduce this argument. 

We begin by noticing that the case $j=2$ is Lemma~\ref{l:mixed_2}. We proceed by induction. Suppose that we have \eqref{eq:induction} for every $k=2,\dots,j-1$: for every collection of convex bodies $L_1,\dots,L_{n-1}\subset\R^n$, we have
\begin{equation}
\label{eq:induction_2}
\vol_{nm}\left(\PP(L_1, \dots, L_{k}, \mathfrak{L})\right)^k \leq \prod_{i=1}^k \vol_{nm}\left(\PP(L_i[k], \mathfrak{L})\right),
\end{equation}
where $\mathfrak{L}=(L_{k+1},\dots,L_{n-1})$, and, furthermore, the $L_i$ being homothetic to each other implies equality in \eqref{eq:induction_2}. Then, for every collection of convex bodies $K_1,\dots,K_{n-1}\subset\R^n$
\begin{equation}
\label{eq:induction_3}
\vol_{nm}\left(\PP(K_1, \dots, K_{j-1},K_j, \mathfrak{K})\right)^{j-1} \leq \prod_{i=1}^{j-1} \vol_{nm}\left(\PP(K_i[j-1],K_j, \mathfrak{K})\right),
\end{equation}
where $\mathfrak{K}=(K_{j+1},\dots,K_{n-1})$ and we applied \eqref{eq:induction_2} with $k=j-1$, $L_i=K_i$ for $i=1,\dots,j-1$ and $\mathfrak{L}=(K_j,\mathfrak{K})$. 
Next, we apply Lemma~\ref{l:induction_1} and obtain, for $i=1,\dots,j-1,$
\begin{equation}
\label{eq:inner_step}
\vol_{nm}\left(\PP\!(K_i[j-1], K_j, \mathfrak{K})\right) \!\leq\!\! \vol_{nm}\left(\PP\!(K_i[j], \mathfrak{K})\right)^\frac{j-1}{j} \!\!\vol_{nm}\left(\PP\!(K_j[j], \mathfrak{K})\right)^\frac{1}{j}.
\end{equation}
Substituting \eqref{eq:inner_step} back into \eqref{eq:induction_3}, the inequality becomes:
\begin{equation}
\vol_{nm}\left(\PP(K_1, \dots, K_j, \mathfrak{K})\right)^{j-1} \leq \prod_{i=1}^{j} \vol_{nm}\left(\PP(K_i[j], \mathfrak{K})\right)^\frac{j-1}{j}.
\end{equation}
Raising both sides to the $\left(\frac{j}{j-1}\right)$-power establishes the claimed inequality \eqref{eq:induction} for $j$. This completes the induction.
\end{proof}
\begin{proof}[Proof of Theorem~\ref{t:mixed}]
    We first exhaust the cases that do not require iteration. Notice that, when $n=2$, both sides of the inequality are just $\vol_{2m}(\PP(K_1))$, and so there is always equality. When $n=3$, the inequality reads as
    \[
     \vol_{3m}(\PP(K_1,K_{2}))^{2}\leq \vol_{3m}(\PP K_1)\vol_{3m}(\PP K_2),
    \]
    which is just Lemma~\ref{l:mixed_1}. As for the case when $n=4$, we start with Lemma~\ref{l:mixed_2} and obtain
     \[
    \vol_{4m}(\PP(K_1,K_2,K_3))^2 \leq \vol_{4m}(\PP (K_1,K_1,K_3)) \vol_{4m}(\PP (K_2,K_2,K_3) ).
    \]
    We next raise both sides of the above inequality to the $3$rd power and apply Lemma~\ref{l:mixed_1} to each term, yielding
     \begin{align*}
    &\vol_{4m}(\PP(K_1,K_2,K_3))^6 \leq \vol_{4m}(\PP (K_1,K_1,K_3))^3 \vol_{4m}(\PP (K_2,K_2,K_3))^3
    \\
    &\leq \left(\vol_{4m}(\PP K_1)^{2} \vol_{4m}(\PP K_3)\right) \left(\vol_{4m}(\PP K_2)^{2} \vol_{4m}(\PP K_3)\right).
    \end{align*}
    The inequality follows by taking the square root of both sides of the inequality.

    For $n\geq 5$, observe that, if we set $j=n-1$ in Lemma~\ref{l:induction_2}, then the tail $\mathfrak{K}$ is empty, and we have:
\begin{equation*}
\vol_{nm}\left(\PP(K_1, \dots, K_{n-1})\right)^{n-1} \leq \prod_{i=1}^{n-1} \vol_{nm}\left(\PP(K_i)\right),
\end{equation*}
which is the sought-after inequality. But we also want a route that uses Lemma~\ref{l:mixed_1}, to inherit its equality conditions. To obtain this, we set $j=n-2$ in Lemma~\ref{l:induction_2} and deduce, by raising both sides to the $n-1$ power,
\begin{align*}
\vol_{nm}\left(\PP(K_1, \dots, K_{n-2},K_{n-1})\right)^{(n-1)(n-2)} &\leq \prod_{i=1}^{n-2} \vol_{nm}\left(\PP(K_i[n-2], K_{n-1})\right)^{n-1} 
\\
&\leq \prod_{i=1}^{n-1}\vol_{nm}\left(\PP(K_i)\right)^{n-2}.
\end{align*}
Here, we used Lemma~\ref{l:mixed_1} in the final line. Taking the $(n-2)$nd root yields the inequality. For the equality case, we have by Lemma~\ref{l:mixed_1} that the $K_i$ must be homothetic to each other. Notice we also have equality in our use of Lemma~\ref{l:induction_2} in this case.
\end{proof}

{\bf Acknowledgments:} The author was funded by the U.S. National Science Foundation's MSPRF fellowship via NSF grant DMS-2502744. We thank Yao Qiu for a thorough proofreading of a previous draft. We sincerely thank the anonymous referee, whose comments greatly improved the presentation of the facts herein.

\bibliographystyle{acm}
\bibliography{references}

\begin{thebibliography}{10}

\bibitem{AG14}
{\sc Alonso-Guti\'errez, D.}
\newblock On a reverse {P}etty projection inequality for projections of convex bodies.
\newblock {\em Adv. Geom. 14}, 2 (2014), 215--223.

\bibitem{AGA}
{\sc Artstein-Avidan, S., Giannopoulos, A., and Milman, V.}
\newblock {\em {Asymptotic} {Geometric} {Analysis}, Part I}, vol.~202 of {\em Mathematical Surveys and Monographs}.
\newblock American Mathematical Society, Providence, RI, 2015.

\bibitem{Ball91}
{\sc Ball, K.}
\newblock Volume ratios and a reverse isoperimetric inequality.
\newblock {\em J. London Math. Soc. (2) 44}, 2 (1991), 351--359.

\bibitem{GB23}
{\sc Bianchi, G.}
\newblock The covariogram problem.
\newblock In {\em Harmonic Analysis and Convexity}, A.~Koldobsky and A.~Volberg, Eds., Adv. Anal. Geom. De Gruyter, 2023, pp.~37--82.

\bibitem{BK13}
{\sc B\"or\"oczky, K.~J.}
\newblock Stronger versions of the {O}rlicz-{P}etty projection inequality.
\newblock {\em J. Differential Geom. 95}, 2 (2013), 215--247.

\bibitem{Cha67}
{\sc Chakerian, G.~D.}
\newblock Inequalities for the difference body of a convex body.
\newblock {\em Proc. Amer. Math. Soc. 18\/} (1967), 879--884.

\bibitem{CLYZ09}
{\sc Cianchi, A., Lutwak, E., Yang, D., and Zhang, G.}
\newblock Affine {M}oser-{T}rudinger and {M}orrey-{S}obolev inequalities.
\newblock {\em Calc. Var. Partial Differential Equations 36}, 3 (2009), 419--436.

\bibitem{gardner_book}
{\sc Gardner, R.~J.}
\newblock {\em Geometric {T}omography}, 2nd~ed., vol.~58 of {\em Encyclopedia of Mathematics and its Applications}.
\newblock Cambridge University Press, Cambridge, UK, 2006.

\bibitem{GZ98}
{\sc Gardner, R.~J., and Zhang, G.}
\newblock Affine inequalities and radial mean bodies.
\newblock {\em Amer. J. Math. 120}, 3 (1998), 505--528.

\bibitem{GP99}
{\sc Giannopoulos, A., and Papadimitrakis, M.}
\newblock Isotropic surface area measures.
\newblock {\em Mathematika 46}, 1 (1999), 1--13.

\bibitem{HS2009}
{\sc Haberl, C., and Schuster, F.~E.}
\newblock Asymmetric affine {$L_p$} {S}obolev inequalities.
\newblock {\em J. Funct. Anal. 257}, 3 (2009), 641--–658.

\bibitem{HS09}
{\sc Haberl, C., and Schuster, F.~E.}
\newblock General {$L_p$} affine isoperimetric inequalities.
\newblock {\em J. Differential Geom. 83}, 1 (2009), 1--26.

\bibitem{HSX12}
{\sc Haberl, C., Schuster, F.~E., and Xiao, J.}
\newblock An asymmetric affine {P}\'olya-{S}zeg\"o{} principle.
\newblock {\em Math. Ann. 352}, 3 (2012), 517--542.

\bibitem{HLPRY25}
{\sc Haddad, J., Langharst, D., Putterman, E., Roysdon, M., and Ye, D.}
\newblock Affine isoperimetric inequalities for higher-order projection and centroid bodies.
\newblock {\em Math. Ann. 393}, 1 (2025), 1073--1121.

\bibitem{HLPRY25_2}
{\sc Haddad, J., Langharst, D., Putterman, E., Roysdon, M., and Ye, D.}
\newblock Higher-order {$L^p$} isoperimetric and {S}obolev inequalities.
\newblock {\em J. Funct. Anal. 288}, 2 (2025), Paper No. 110722, 45.

\bibitem{DL25}
{\sc Langharst, D.}
\newblock On moment-entropy inequalities in the space of matrices.
\newblock {\em Preprint, {\tt arXiv: 2503.00451}\/} (2025).

\bibitem{LRZ24}
{\sc Langharst, D., Roysdon, M., and Zhao, Y.}
\newblock On the {$m$}th-order affine {P}\'olya-{S}zeg\"o{} principle.
\newblock {\em J. Geom. Anal. 35}, 7 (2025), Paper No. 205, 32.

\bibitem{LE85}
{\sc Lutwak, E.}
\newblock Mixed projection inequalities.
\newblock {\em Trans. Amer. Math. Soc. 287}, 1 (1985), 91--105.

\bibitem{LLYZ13}
{\sc Lutwak, E., Lv, S., Yang, D., and Zhang, G.}
\newblock Affine moments of a random vector.
\newblock {\em IEEE Trans. Inform. Theory 59}, 9 (2013), 5592--5599.

\bibitem{LYZ00}
{\sc Lutwak, E., Yang, D., and Zhang, G.}
\newblock {$L_p$} affine isoperimetric inequalities.
\newblock {\em J. Differential Geom. 56}, 1 (2000), 111--132.

\bibitem{LYZ02}
{\sc Lutwak, E., Yang, D., and Zhang, G.}
\newblock Sharp affine {$L_p$} sobolev inequalities.
\newblock {\em J. Differential Geom. 62}, 1 (2002), 17--38.

\bibitem{LYZ04_3}
{\sc Lutwak, E., Yang, D., and Zhang, G.}
\newblock Moment-entropy inequalities.
\newblock {\em Ann. Probab. 32}, 1B (2004), 757--774.

\bibitem{LYZ05_2}
{\sc Lutwak, E., Yang, D., and Zhang, G.}
\newblock Cram\'er-{R}ao and moment-entropy inequalities for {R}enyi entropy and generalized {F}isher information.
\newblock {\em IEEE Trans. Inform. Theory 51}, 2 (2005), 473--478.

\bibitem{LYZ07}
{\sc Lutwak, E., Yang, D., and Zhang, G.}
\newblock Moment-entropy inequalities for a random vector.
\newblock {\em IEEE Trans. Inform. Theory 53}, 4 (2007), 1603--1607.

\bibitem{LYZ10}
{\sc Lutwak, E., Yang, D., and Zhang, G.}
\newblock Orlicz projection bodies.
\newblock {\em Adv. Math. 223}, 1 (2010), 220--242.

\bibitem{MA}
{\sc Matheron, G.}
\newblock {\em Random Sets and Integral Geometry}.
\newblock Wiley {S}eries in {P}robability and {S}tatistics. Wiley, Michigan, 1975.

\bibitem{NVH19}
{\sc Nguyen, V.~H.}
\newblock A simple proof of the moment-entropy inequalities.
\newblock {\em Adv. in Appl. Math. 108\/} (2019), 31--44.

\bibitem{petty61_1}
{\sc Petty, C.~M.}
\newblock Surface area of a convex body under affine transformations.
\newblock {\em Proc. Amer. Math. Soc. 12\/} (1961), 824--828.

\bibitem{CMP71}
{\sc Petty, C.~M.}
\newblock Isoperimetric problems.
\newblock In {\em Proceedings of the {C}onference on {C}onvexity and {C}ombinatorial {G}eometry ({U}niv. {O}klahoma, {N}orman, {O}kla., 1971)}. Dept. Math., Univ. Oklahoma, Norman, Oklahoma, 1971, pp.~26--41.

\bibitem{RS57}
{\sc Rogers, C.~A., and Shephard, G.~C.}
\newblock The difference body of a convex body.
\newblock {\em Arch. Math. (Basel) 8\/} (1957), 220--233.

\bibitem{Sch70}
{\sc Schneider, R.}
\newblock Eine {V}erallgemeinerung des {D}ifferenzenk\"{o}rpers.
\newblock {\em Monatsh. Math. 74\/} (1970), 258--272.

\bibitem{Sh1}
{\sc Schneider, R.}
\newblock {\em Convex {B}odies: the {B}runn-{M}inkowski {T}heory}, 2nd expanded~ed., vol.~151 of {\em Encyclopedia of {M}athematics and its {A}pplications}.
\newblock Cambridge University Press, Cambridge, Cambridge, UK, 2014.

\bibitem{SH23}
{\sc Shenfeld, Y., and van Handel, R.}
\newblock The extremals of the {A}lexandrov-{F}enchel inequality for convex polytopes.
\newblock {\em Acta Math. 231}, 1 (2023), 89--204.

\bibitem{TW12}
{\sc Wang, T.}
\newblock The affine {S}obolev-{Z}hang inequality on {$BV(\mathbb{R}^n)$}.
\newblock {\em Adv. Math. 230}, 4-6 (2012), 2457--2473.

\bibitem{TW13}
{\sc Wang, T.}
\newblock The affine {P}\'olya-{S}zeg\"o{} principle: equality cases and stability.
\newblock {\em J. Funct. Anal. 265}, 8 (2013), 1728--1748.

\bibitem{LW07}
{\sc Wang, W.~D., and Leng, G.~S.}
\newblock The {P}etty projection inequality for {$L_p$}-mixed projection bodies.
\newblock {\em Acta Math. Sin. (Engl. Ser.) 23}, 8 (2007), 1485--1494.

\bibitem{Zhang91}
{\sc Zhang, G.}
\newblock Restricted chord projection and affine inequalities.
\newblock {\em Geom. Dedicata 39}, 2 (1991), 213--222.

\bibitem{GZ99}
{\sc Zhang, G.}
\newblock The affine {S}obolev inequality.
\newblock {\em J. Differential Geom. 53}, 1 (1999), 183--202.

\bibitem{YZZ25}
{\sc Zhou, X., Ye, D., and Zhang, Z.}
\newblock The {$m$}th order {O}rlicz projection bodies.
\newblock {\em J. Geom. Anal. 35}, 9 (2025), Paper No. 259, 35.

\end{thebibliography}

\end{document}